\theoremstyle{plain}
\newtheorem{theorem}{Theorem}[section]
\newtheorem{proposition}[theorem]{Proposition}
\newtheorem{lemma}[theorem]{Lemma}
\newtheorem{corollary}[theorem]{Corollary}
\theoremstyle{definition}
\newtheorem{definition}[theorem]{Definition}
\newtheorem{assumption}[theorem]{Assumption}
\theoremstyle{remark}
\newtheorem{remark}[theorem]{Remark}
\newcommand{\Q}{\mathbb Q}
\newcommand{\Z}{\mathbb Z}
\DeclareMathOperator{\Frob}{Frob}
\DeclareMathOperator{\Hom}{Hom}
\DeclareMathOperator{\Div}{Div}
\DeclareMathOperator{\Gal}{Gal}
\DeclareMathOperator{\Sel}{Sel}
\DeclareMathOperator{\M}{M}
\DeclareMathOperator{\Char}{Char}
\newcommand{\val}{\mathrm{val}}
\newcommand{\res}{\mathrm{res}}
\newcommand{\ord}{\mathrm{ord}}
\newcommand{\cont}{\mathrm{cont}}
\newcommand{\T}{\mathbf{T}}
\newcommand{\A}{\mathbf{A}}
\newfont{\cyr}{wncyr10 scaled 1100}
\newfont{\cyrr}{wncyr9 scaled 1000}
\definecolor{Indigo}{rgb}{0.2,0.1,0.7}
\definecolor{Violet}{rgb}{0.5,0.1,0.7}
\definecolor{White}{rgb}{1,1,1}
\definecolor{Green}{rgb}{0.1,0.9,0.2}
\newcommand{\mat}[4]{\left(\begin{array}{cc}#1&#2\\#3&#4\end{array}\right)}
\newcommand{\dirlim}{\mathop{\varinjlim}\limits}
\newcommand{\invlim}{\mathop{\varprojlim}\limits}
\newcommand{\p}{\mathfrak p}
\newcommand{\q}{\mathfrak q}
\newcommand{\pwseries}[1]{[[#1]]}
\newcommand{\F}{\mathbf f}
\newcommand{\I}{\mathcal R}
\def\k{\kappa}
\begin{document}

\title[AMC and Rankin--Selberg $L$-values in Hida families]{Anticyclotomic main conjecture and the non-triviality of Rankin--Selberg $L$-values in Hida families}

\today
\author{Chan-Ho Kim}
\address{Center for Mathematical Challenges, Korea Institute for Advanced Study, 85 Hoegiro, Dongdaemun-gu, Seoul 02455, Republic of Korea}
\email{chanho.math@gmail.com}

\author{Matteo Longo}
\address{Dipartimento di Matematica, Universit\`a di Padova, Via Trieste 63, 35121 Padova, Italy}
\email{mlongo@math.unipd.it}

\thanks{}

\begin{abstract}
We prove the two-variable anticyclotomic Iwasawa main conjecture for Hida families and discuss its arithmetic application to a definite version of the horizontal non-vanishing conjecture, which is formulated in \cite{LV-MM}.
Our approach is based on the two-variable anticyclotomic control theorem for Selmer groups and
 the relation between the two-variable anticyclotomic $L$-function for Hida families built out of $p$-adic families of Gross points on definite Shimura curves studied in \cite{CL} and \cite{CKL} and the self-dual twist of the specialisation to the anticyclotomic line of the three-variable $p$-adic $L$-function of Skinner--Urban \cite{SU}. 
\end{abstract}

\subjclass[2020]{11R23,	11F33,11F67}

\keywords{Anticyclotomic main conjecture, Rankin--Selberg $L$-values, Hida families, control theorem, Iwasawa theory}

\maketitle

\tableofcontents

\section{Introduction}

To state our main results, fix a prime $p \geq 5$ and an integer $N$ with $p\nmid N$, and let $\mathcal{R}$ be a primitive branch of a Hida family of $p$-adic modular forms of tame conductor $N$; more precisely, $\mathcal{R}$ is a noetherian domain, finite and flat over the Iwasawa algebra $\Lambda=\mathcal{O}\pwseries{1+p\Z_p}$, where $\mathcal{O}$ is the valuation ring of a fixed finite extension of $\Q_p$.
We assume throughout that $\mathcal{R}$ is a regular ring. Let 
$\mathbf{f}=\sum_{n\geq 1}\mathbf{a}_nq^n\in \mathcal{R}\pwseries{q}$ denote the Hida family of $p$-adic modular forms associated with $\mathcal{R}$. 
For each arithmetic prime $\kappa:\mathcal{R}\rightarrow F_\kappa \subseteq \bar\Q_p$, where $F_\kappa$ is a finite extension of $\mathbb{Q}_p$, the specialisation 
$f_\kappa=\sum_{n\geq 1}\kappa(\mathbf{a}_n)q^n \in F_\kappa\pwseries{q}$  of $\mathbf{f}$ at $\kappa$ is a $p$-ordinary cuspform of level $\Gamma_1(Np^{s_\kappa})$, weight $k_\kappa$ and character $\psi_\kappa$, for an integer $s_\kappa\geq 1$ and an integer $k_\kappa\geq 2$; see \S\ref{Hida} for a more accurate exposition. 

Let $\mathbf{T}^\dagger$ denote the self-dual twist of Hida's big Galois representation attached to $\mathcal{R}$; therefore, $\mathbf{T}^\dagger$ is a free $\mathcal{R}$-module of rank $2$, 
and for each arithmetic prime $\kappa:\mathcal{R}\rightarrow F_\kappa$, the specialisation $V_\kappa^\dagger=\mathbf{T}^\dagger\otimes_{\mathcal{R},\kappa}F_\kappa$ at $\kappa$ is isomorphic to the base change to $F_\kappa$ of the self-dual twist of Deligne's Galois representation attached to the modular form $f_\kappa$. Let $\mathbf{A}^\dagger=\mathbf{T}^\dagger\otimes_{\mathcal{R}}\mathcal{R}^\vee$. Here, $(-)^\vee$ means the Pontryagin dual.

Let $K$ be a quadratic imaginary field of discriminant prime to $Np$, and write the factorisation $N=N^+N^-$ where a prime divisor $\ell$ of $N$ divides $N^+$ if and only if it is split in $K$. Throughout the paper, we place ourselves in the \emph{definite setting}; more precisely, we assume that 
\begin{itemize}
\item $N^-$ is a square-free product of an odd number of distinct primes; we also assume that $p$ is split in $K$. 
\end{itemize} 
If $f_\kappa$ has trivial character, the order of vanishing at $s=k/2$ of the $L$-series $L(f_\kappa/K,s)$ of $f_\kappa$ over $K$ is even by the assumption above on $N^-$, and therefore, in light of an analogue in this setting of Greenberg's conjecture, one expects that these $L$-values do not generically vanish when the order of vanishing of the $L$-series $L(f_\kappa,s)$ of $f_\kappa$ over $\Q$ is also even. 
As a consequence of the Tamagawa number conjecture of Bloch--Kato, one 
expects that the Bloch--Kato Selmer groups of $V_\kappa^\dagger$ over $K$ is generically trivial in the same setting, 
so one expects that Nekov\'a\v{r}'s extended Selmer group of $\mathbf{A}^\dagger$ over $K$ is a cotorsion $\mathcal{R}$-module.  when the order of vanishing of the $L$-series $L(f_\kappa,s)$ of $f_\kappa$ over $\Q$ is even. 
See \cite[Conjecture 9.5]{LV-MM} for a more detailed discussion of this heuristic.

Denote by $K_\infty$ the anticyclotomic $\Z_p$-extension of $K$, and denote $\Gamma_\infty=\Gal(K_\infty/K)\simeq\Z_p$. 
In \cite{LV-MM} a big theta element $\Theta_\infty(\mathbf{f})$ 
in $\mathcal{R}\pwseries{\Gamma_\infty}$ 
is constructed by means of compatible families of Gross points on towers of Shimura curves associated with the definite quaternion algebra $B$ ramified at all primes dividing $N^-$ and Eichler orders of increasing $p$-power level; we also define the two-variable anticyclotomic $p$-adic $L$-function 
\[L_p(\mathbf{f}/K)=\Theta_\infty(\mathbf{f})\cdot\Theta_\infty(\mathbf{f})^*,\] where $x\mapsto x^*$ is the involution of $\mathcal{R}\pwseries{\Gamma_\infty}$ defined by $\gamma\mapsto\gamma^{-1}$ on group-like elements. The construction $\Theta_\infty(\mathbf{f})$, $L_p(\mathbf{f}/K)$, and the notion of compatibility of Gross points on towers of Shimura curves, are reviewed in \S\ref{p-adic-L-functions}.

The element $L_p(\mathbf{f}/K)$ is the analogue of the $p$-adic $L$-function $L_p(E/K)$ introduced by Bertolini--Darmon \cite{BD-heegner}, 
\cite{BDmumford-tate} and \cite{BD-IMC} using Gross points on definite Shimura curves to study the arithmetic of elliptic curves over $K$ in a similar definite setting. 
In particular, if the conductor $N$ of $E$ admits the same factorisation $N=N^+N^-$ as above and $p$ is a prime of good ordinary reduction for $E$ which splits in $K$, then, under mild technical hypotheses, it is known that 
the $p$-adic $L$-function $L_p(E/K)$ is a non-trivial element of 
$\Z_p\pwseries{\Gamma_\infty}$, 
the Pontryagin dual of the Selmer group of the $p$-power torsions of $E$ over $K_\infty$ is a torsion $\Z_p \pwseries{\Gamma_\infty}$-module and its characteristic ideal of equal to the ideal generated by the $p$-adic $L$-function 
(see \cite[Theorem 1]{BD-IMC} and \cite[Theorem 3.37]{SU}). 
Therefore, it is natural to expect a similar main conjecture holds for families.
 In other words, if $\mathcal{R}\pwseries{\Gamma_\infty}$ is the Iwasawa algebra of $\Gamma_\infty$ with coefficients in $\mathcal{R}$, then one expects that 
the two-variable $p$-adic $L$-function $L_p(\mathbf{f}/K)$ is a non-zero element of $\mathcal{R}\pwseries{\Gamma_\infty}$, the Selmer group of $\mathbf{A}^\dagger$ over $K_\infty$ is a cotorsion $\mathcal{R}\pwseries{\Gamma_\infty}$-module, and its characteristic ideal is equal to the ideal generated by the $p$-adic $L$-function. See \cite[\S9.3]{LV-MM} for a more detailed discussion of this topic. The proof of these assertions is one of the main result of this paper, from which we also deduce some results on the Selmer group 
of $\mathbf{T}^\dagger$ over $K$. 

Let $\tilde{H}^1_f(K_\infty,\mathbf{A}^\dagger)$ be Nekov\'{a}\v{r}'s extended Selmer group of $\mathbf{A}^\dagger$ over $K_\infty$ and  
$\tilde{H}^1_f(K_\infty,\mathbf{A}^\dagger)^\vee$ its Pontryagin dual, which are discrete and compact $\mathcal{R}\pwseries{\Gamma_\infty}$-modules, respectively.
Before stating our main results, we fix our assumptions.
We suppose that there exists an arithmetic prime $\kappa_0$ such that $f_0=f_{\kappa_0}=\sum_{n\geq 1}a_nq^n \in S_{k_0}(\Gamma_0(Np))$ is an ordinary $p$-stabilised newform of weight $k_0\geq 2$
with $k_0 \equiv 2\pmod{p-1}$ and trivial nebentypus character.
We denote $\bar\rho_{f_0}$ the residual representation attached to $f_0$.

Our first main result is the two-variable anticyclotomic Iwasawa main conjecture for Hida families, which proves \cite[Conjecture 9.12]{LV-MM}. 
\begin{theorem} \label{thm:main-theorem-1}
We assume the following statements.
\begin{itemize}
\item 
$N^-$ is a square-free product of an odd number of distinct primes. 
\item The residual representation $\bar{\rho}_{f_0}$ is absolutely irreducible, $p$-distinguished, and ramified at all primes $\ell\mid N^-$.
\item $p$ is a \emph{non-anomalous prime} for $\bar{\rho}_{f_0}$ when $k=2$, i.e. $a_p(f_0) \not\equiv \pm1$ modulo the maximal ideal of $\mathcal{O}$. 
\item $p$ is split in $K$. 
\end{itemize}
Then
$\tilde{H}^1_f(K_\infty,\mathbf{A}^\dagger)^\vee$ is a cotorsion $\mathcal{R}\pwseries{\Gamma_\infty}$-module, and its characteristic ideal is equal to 
the ideal generated by the two-variable $p$-adic $L$-function $L_p(\mathbf{f}/K)$. 
\end{theorem}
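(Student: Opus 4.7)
The plan is to reduce the two-variable main conjecture to the one-variable anticyclotomic main conjecture at the specialisation $\kappa_0$, via the two-variable anticyclotomic control theorem for Selmer groups and a comparison of $p$-adic $L$-functions. Concretely, I would proceed in three steps. First, verify that $L_p(\mathbf{f}/K)$ is a non-zero element of $\mathcal{R}\pwseries{\Gamma_\infty}$. Second, derive a divisibility
\[
\mathrm{char}_{\mathcal{R}\pwseries{\Gamma_\infty}}\!\bigl(\tilde{H}^1_f(K_\infty,\mathbf{A}^\dagger)^\vee\bigr) \supseteq \bigl(L_p(\mathbf{f}/K)\bigr)
\]
from the three-variable main conjecture of Skinner--Urban \cite{SU} by restricting to the anticyclotomic line and invoking the identification of $L_p(\mathbf{f}/K)$ with the self-dual twist of this specialisation. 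Third, promote the divisibility to an equality using a comparison at $\kappa_0$, where the one-variable anticyclotomic main conjecture is already known.

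For the non-triviality, I would use the compatibility of $\Theta_\infty(\mathbf{f})$ with the one-variable theta elements $\Theta_\infty(f_\kappa)$ built from the $p$-adic families of Gross points in \cite{CL} and \cite{CKL}: the specialisation of $\Theta_\infty(\mathbf{f})$ at $\kappa_0$ recovers $\Theta_\infty(f_0)$ up to a non-zero factor, and $\Theta_\infty(f_0)$ is non-zero by Cornut--Vatsal type vertical non-vanishing on towers of Gross points, applicable under the hypotheses that $\bar\rho_{f_0}$ is absolutely irreducible and ramified at primes dividing $N^-$. Non-triviality of $L_p(\mathbf{f}/K)=\Theta_\infty(\mathbf{f})\cdot\Theta_\infty(\mathbf{f})^*$ follows, since $\mathcal{R}\pwseries{\Gamma_\infty}$ is a domain.

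For the final comparison, the two-variable control theorem yields, after reducing modulo $\ker(\kappa_0)$, a pseudo-isomorphism
\[
\tilde{H}^1_f(K_\infty,\mathbf{A}^\dagger)^\vee \otimes_{\mathcal{R},\kappa_0}F_{\kappa_0} \,\sim\, \tilde{H}^1_f(K_\infty,A_{f_0}^\dagger)^\vee,
\]
whose right-hand side is $\mathcal{O}\pwseries{\Gamma_\infty}$-cotorsion with characteristic ideal generated by the anticyclotomic $p$-adic $L$-function of $f_0$, by the known one-variable main conjecture. Specialising the divisibility of step two at $\kappa_0$ then yields an equality of ideals in $\mathcal{O}\pwseries{\Gamma_\infty}$. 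Using that $\mathcal{R}$, and hence $\mathcal{R}\pwseries{\Gamma_\infty}$, is a regular integral domain and that $L_p(\mathbf{f}/K)$ stays non-zero modulo $\ker(\kappa_0)$, a standard characteristic-ideal argument (checking the divisibility prime-by-prime at height-one primes containing $\ker(\kappa_0)$) upgrades the divisibility to an equality in $\mathcal{R}\pwseries{\Gamma_\infty}$.

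The most delicate ingredient should be the precise identification of $L_p(\mathbf{f}/K)$ with the anticyclotomic specialisation of the three-variable Skinner--Urban $p$-adic $L$-function up to a unit, since the two objects are constructed by very different means (Gross points on definite Shimura curves versus Eisenstein congruences on $\mathrm{GU}(2,2)$); this rests essentially on the interpolation results of \cite{CL} and \cite{CKL} propagated to the two-variable setting. A secondary technical point is the two-variable control theorem itself: vanishing of error terms at primes dividing $N^-$ relies on the ramification hypothesis on $\bar\rho_{f_0}$, while at $p$ the non-anomalous assumption in weight two is used to control the kernel of specialisation. Once these inputs are secured, the final assembly into equality of characteristic ideals is essentially formal.
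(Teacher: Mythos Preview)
Your proposal is correct and follows essentially the paper's strategy: obtain one divisibility in $\mathcal{R}\pwseries{\Gamma_\infty}$ by restricting the Skinner--Urban three-variable main conjecture to the anticyclotomic line and identifying the resulting $p$-adic $L$-function with $L_p(\mathbf{f}/K)$ up to a unit (this is \cite[Theorem~11.1]{KL}), then upgrade to equality using the one-variable anticyclotomic main conjecture at a single arithmetic specialisation (\cite[Corollary~3]{CKL}).

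Two small points where you diverge from the paper. First, the divisibility that actually descends from \cite{SU} together with Rubin's specialisation lemma is $\mathrm{Char}\subseteq\bigl(L_p(\mathbf{f}/K)\bigr)$, the Eisenstein (lower-bound-on-Selmer) direction, which is the opposite containment to the one you wrote; the upgrade argument works the same way, but it is worth getting the direction right. Second, the control theorem you invoke in your step three is in the \emph{weight} direction (reducing modulo $\ker\kappa_0$), whereas the control theorem developed in \S\ref{CT} of the paper is in the \emph{anticyclotomic} direction and is in fact not used in the proof of Theorem~\ref{thm:main-theorem-1} at all---it enters only in the proof of the corollary on $\mathcal{J}_0$. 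The weight-direction comparison you need is already packaged inside the cited results of \cite{CKL}, so your argument goes through, but your attribution of where the control theorem is doing work is slightly off.
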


We also deduce a result on the arithmetic of $\mathbf{f}$ over $K$, which is an 
definite analogue of the horizontal non-vanishing conjecture \cite[Conjecture 9.5]{LV-MM}.
 Define 
 $\mathcal{J}_0=\chi_\mathrm{triv}\left(\Theta_{\infty}(\mathbf{f})\right)$, 
 where $\chi_\mathrm{triv}$ is the trivial character of $\Gamma_\infty$ 
 and let $\tilde H^1_f(K,\mathbf{T}^\dagger)$ denote Nekov\'a\v{r}'s extended Selmer group of $\mathbf{T}^\dagger$ over $K_\infty$.
 \begin{theorem}
 Under the same assumptions in Theorem $\ref{thm:main-theorem-1}$, if 
$\tilde H^1_f(K,\mathbf{T}^\dagger)$ is a torsion $\mathcal{R}$-module, then $\mathcal{J}_0\neq0$.  
\end{theorem}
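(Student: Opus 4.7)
The plan is a specialisation argument at the trivial character, combined with the two-variable main conjecture just established in Theorem \ref{thm:main-theorem-1}.

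First, since the involution $x\mapsto x^*$ fixes $\chi_{\mathrm{triv}}$, one has
\[\chi_{\mathrm{triv}}\bigl(L_p(\mathbf{f}/K)\bigr)=\chi_{\mathrm{triv}}\bigl(\Theta_\infty(\mathbf{f})\bigr)\cdot\chi_{\mathrm{triv}}\bigl(\Theta_\infty(\mathbf{f})^*\bigr)=\mathcal{J}_0^2,\]
so, $\mathcal{R}$ being a regular domain, it suffices to prove that $\chi_{\mathrm{triv}}(L_p(\mathbf{f}/K))\neq 0$ in $\mathcal{R}$.

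By Theorem \ref{thm:main-theorem-1}, $\mathrm{char}_{\mathcal{R}\pwseries{\Gamma_\infty}}\bigl(\tilde H^1_f(K_\infty,\mathbf{A}^\dagger)^\vee\bigr)=(L_p(\mathbf{f}/K))$. Let $\gamma$ be a topological generator of $\Gamma_\infty$, so that $\chi_{\mathrm{triv}}$ corresponds to reduction modulo $\gamma-1$. A standard argument based on the structure theorem for finitely generated torsion modules over the two-dimensional ring $\mathcal{R}\pwseries{\Gamma_\infty}$ shows that $\chi_{\mathrm{triv}}(L_p(\mathbf{f}/K))\neq 0$ in $\mathcal{R}$ if and only if the $\Gamma_\infty$-coinvariants $\bigl(\tilde H^1_f(K_\infty,\mathbf{A}^\dagger)^\vee\bigr)_{\Gamma_\infty}$ form a torsion $\mathcal{R}$-module. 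At this point I would apply the two-variable anticyclotomic control theorem developed earlier in the paper to identify these coinvariants, up to an $\mathcal{R}$-torsion discrepancy, with $\tilde H^1_f(K,\mathbf{A}^\dagger)^\vee$, and then invoke Nekov\'a\v{r}'s duality for extended Selmer groups in the self-dual setting to translate the hypothesis that $\tilde H^1_f(K,\mathbf{T}^\dagger)$ is $\mathcal{R}$-torsion into the equivalent statement that $\tilde H^1_f(K,\mathbf{A}^\dagger)^\vee$ is $\mathcal{R}$-torsion. Chaining these equivalences yields $\chi_{\mathrm{triv}}(L_p(\mathbf{f}/K))\neq 0$ in the regular domain $\mathcal{R}$, and hence $\mathcal{J}_0\neq 0$.

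The main obstacle lies in the control-theorem step: one has to verify that the kernel and cokernel of the specialisation map at $\gamma-1$ are killed by a non-zero element of $\mathcal{R}$, rather than being merely pseudo-null for the two-variable algebra, so that the torsionness of the coinvariants transfers cleanly to $\tilde H^1_f(K,\mathbf{A}^\dagger)^\vee$. The passage between $\mathbf{T}^\dagger$ and $\mathbf{A}^\dagger$ via Nekov\'a\v{r}'s framework should be routine under the strong residual hypotheses of Theorem \ref{thm:main-theorem-1} (absolute irreducibility, $p$-distinguishedness, non-anomaly), which in particular rule out spurious global invariants and coinvariants that could obstruct the duality.
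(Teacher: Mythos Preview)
Your proposal is correct and follows essentially the same route as the paper: main conjecture, then control theorem at $\gamma-1$, then a duality step passing from $\tilde H^1_f(K,\mathbf{T}^\dagger)$ to $\tilde H^1_f(K,\mathbf{A}^\dagger)^\vee$. The one place where the paper is more explicit is precisely the step you flag as the main obstacle: rather than invoking Nekov\'a\v{r}'s duality directly, the paper argues that torsionness of $\tilde H^1_f(K,\mathbf{T}^\dagger)$ forces $\tilde H^1_f(K,V_{f_\kappa}^\dagger)=0$ for all but finitely many arithmetic $\kappa$ (a specialisation result from \cite{LV-Pisa}), which by \cite[Prop.~12.7.13.4(i)]{Nekovar} gives torsionness of $\tilde H^2_f(K,\mathbf{T}^\dagger)$, and then Poitou--Tate duality yields torsionness of $\tilde H^1_f(K,\mathbf{A}^\dagger)^\vee$.
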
 

The proofs of these results are the combination of the following ingredients.  
\begin{itemize}
\item A control theorem for Selmer groups of Hida's big Galois representations over the antiyclotomic $\Z_p$-extension, similar to analogous results for the cyclotomic $\Z_p$-extension by Ochiai \cite{Ochiai3}, which we prove in \S \ref{CT} of this paper;
\item The results from \cite{CL}, \cite{CKL} and \cite{KL} proving a close relation between $L_p(\mathbf{f}/K)$ and the self-dual twist of the specialisation to the anticyclotomic line of the three-variable $p$-adic $L$-functions of Skinner--Urban \cite{SU}; 
\item The three-variable Iwasawa main conjecture proved by Skinner--Urban \cite{SU}. 
\end{itemize}
As hinted at in the lines above, the proof of the three-variable main conjecture in \cite{SU} has a prominent role in our argument; however, the careful comparison of the two setting is required, for which we use the results from \cite{CL}, \cite{CKL} and \cite{KL}. 

\subsubsection*{Acknowledgements} We thank Francesc Castella and Stefano Vigni for useful discussions. 
Kim was partially supported 
by a KIAS Individual Grant (SP054103) via the Center for Mathematical Challenges at Korea Institute for Advanced Study and
by the National Research Foundation of Korea (NRF) grant funded by the Korea government (MSIT) (No. 2018R1C1B6007009). 
Longo was partially supported by  PRIN 2017 \emph{Geometric, algebraic and analytic methods in arithmetic} and INDAM GNSAGA.

\section{Selmer groups over ordinary deformation rings and their control theorem} \label{CT}
In this section, we first review Iwasawa algebras over complete noetherian regular local rings of Krull dimension $\geq 1$ and Selmer groups of ordinary Galois representations over such rings. 
Then we prove a general control theorem for these Selmer groups and relate them with classical Selmer groups via Shapiro's lemma.
This generality certainly includes the case of Hida deformations.  
The notation of this section is independent of the notation of the other sections of the paper. Some of the arguments are similar to those in \cite{Ochiai1}, \cite{Ochiai}, and \cite{Ochiai3}. 

We first set some general conventions. Let $R$ be a complete noetherian regular local ring with maximal ideal $\mathfrak{m}_R$, of Krull dimension $d\geq 1$, with finite residue field $k=R/\mathfrak{m}_R R$ of characteristic $p$, a prime number. 
For any ideal $I\subseteq R$, and any $R$-module $M$, denote $M[I]$ the $I$-torsion $R$-submodule of $M$ and $M_I$ the localization of $M$ at $I$. Denote 
\[M^*=\Hom_R(M,R)\] the $R$-linear dual of $M$ (where $\Hom_R$ 
denotes $R$-linear homomorphisms) and  
\[M^\vee=\Hom_\mathrm{cont}(M,\Q_p/\Z_p)\] 
the Pontryagin dual of $M$ (where $\Hom_\cont$ denotes 
continuous group homomorphisms). 
By \cite[\S 2.9.1, \S 2.9.2]{Nekovar}, 
\[M^\vee=D(M)=\Hom_R(M,R^\vee)\] under our assumptions for any 
$R$-module $M$ of finite type, hence compact, or any $R$-module $M$ of cofinite type equipped with 
the discrete topology. 
Following \cite[\S 0.4]{Nekovar}, define
\[\Phi(M)=M\otimes_R R^\vee.\]
In particular, $(M^*)^\vee\simeq\Phi(M)$ and $(\Phi(M))^\vee\simeq M^*$  for any $R$-module $M$ of finite type 
(\cite[(0.4.4)]{Nekovar}). Further, by basic properties of Pontryagin duality, 
$(M[\p])^\vee\simeq M^\vee/\p M^\vee$ and, if $M$ is a $G$-module 
for some profinite group $G$, we have $(M^G)^\vee\simeq (M^\vee)_G$. 

\subsection{Iwasawa algebras over regular local rings} Fix a complete noetherian regular local ring $R$, with maximal ideal $\mathfrak{m}_R$, of Krull dimension $d\geq 1$, and finite residue field $k=R/\mathfrak{m}_R$ of characteristic $p$, a prime number. 
Let $F_\infty/F$ be a $\Z_p$-extension of $F$, unramified outside $p$ and totally ramified at $p$, and define $G_\infty=\Gal(F_\infty/F)\simeq\Z_p$. 
Let $F_n$ be the subfield of $F_\infty$ such that 
$G_n=\Gal(F_n/F)\simeq\Z/p^n\Z$ and define 
\[\Lambda_R=R[\![G_\infty]\!]=\invlim_nR[G_n].\] 

We recall briefly some properties of $\Lambda_R$ and finitely generated $\Lambda_R$-modules. We begin with the following standard fact.  

\begin{lemma} \label{lemma3.1} The ring 
$\Lambda_R$ is isomorphic to the power series ring 
$R\pwseries{X}$ via the map which sends a topological generator $\gamma$ of $G_\infty$ to $X-1$. 
\end{lemma}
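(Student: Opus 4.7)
The plan is to realise the isomorphism as an inverse limit of isomorphisms at each finite level $G_n$ and then to invoke the Weierstrass preparation theorem to pass between polynomial and power series quotients.

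First, I would fix the topological generator $\gamma$ of $G_\infty$ and, for each $n\geq 0$, denote by $\gamma_n$ its image in $G_n$. The $R$-algebra map $\psi_n\colon R[X]\to R[G_n]$ defined by $X\mapsto \gamma_n-1$ is surjective (since $\gamma_n$ generates $G_n$) and annihilates $\omega_n(X):=(1+X)^{p^n}-1$ (since $\gamma_n^{p^n}=1$). As both sides are free $R$-modules of rank $p^n$, a rank count forces $\ker(\psi_n)=(\omega_n)$, giving $R[X]/(\omega_n)\cong R[G_n]$. Since $\omega_n\mid\omega_{n+1}$ in $R[X]$ and the transition maps $R[G_{n+1}]\to R[G_n]$ are compatible with these identifications, passing to the inverse limit produces
\[
\Lambda_R\cong\invlim_n R[X]/(\omega_n).
\]

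Next, I would observe that $\omega_n$ is a distinguished polynomial: the expansion $\omega_n(X)=X^{p^n}+\sum_{k=1}^{p^n-1}\binom{p^n}{k}X^k$ together with $v_p\!\left(\binom{p^n}{k}\right)=n-v_p(k)\geq 1$ for $1\leq k<p^n$ shows that $\omega_n$ is monic of degree $p^n$ with all non-leading coefficients in $pR\subseteq\mathfrak{m}_R$. The Weierstrass preparation theorem, valid for the complete noetherian local ring $R\pwseries{X}$, then supplies, for each $f\in R\pwseries{X}$, a unique decomposition $f=\omega_n q+r$ with $q\in R\pwseries{X}$ and $r\in R[X]$ of degree $<p^n$. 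This forces the natural map $R[X]/(\omega_n)\to R\pwseries{X}/(\omega_n)$ to be an isomorphism, so that
\[
\Lambda_R\cong\invlim_n R\pwseries{X}/(\omega_n).
\]

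To conclude, I would verify that the filtration $\{(\omega_n)\}$ is cofinal with the maximal-ideal-adic filtration $\{\mathfrak{m}^N\}$ on $R\pwseries{X}$, where $\mathfrak{m}=(\mathfrak{m}_R,X)$. One direction is given by the binomial expansion: each monomial $\binom{p^n}{k}X^k$ lies in $\mathfrak{m}^{k+n-v_p(k)}\subseteq\mathfrak{m}^{n+1}$, so $(\omega_n)\subseteq\mathfrak{m}^{n+1}$. For the reverse cofinality, the identity $X^{p^n}\equiv 0\pmod{(\omega_n,\mathfrak{m}_R)}$ forces $X$ to act topologically nilpotently on $R\pwseries{X}/(\omega_n)$ modulo $\mathfrak{m}_R$, and since this quotient is finitely generated over $R$, we obtain $\mathfrak{m}^N\subseteq(\omega_n)$ for $N$ sufficiently large. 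Because $R\pwseries{X}$ is $\mathfrak{m}$-adically complete, the cofinality yields $R\pwseries{X}\cong\invlim_n R\pwseries{X}/(\omega_n)\cong\Lambda_R$. The main content lies in the Weierstrass division and the cofinality bookkeeping; no step poses a serious obstacle, as the argument follows the standard pattern for Iwasawa algebras over complete local base rings.
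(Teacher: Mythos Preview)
The paper does not prove this lemma at all; it is stated as a ``standard fact'' and immediately used. Your write-up is therefore far more detailed than anything in the paper, and the overall strategy (finite-level isomorphisms $R[X]/(\omega_n)\cong R[G_n]$, Weierstrass division, then passage to the limit) is the standard one and is correct in outline.

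There is, however, a genuine error in your final step. The reverse cofinality claim ``$\mathfrak{m}^N\subseteq(\omega_n)$ for $N$ sufficiently large'' is false whenever $R$ has Krull dimension $\geq 1$, which is exactly the hypothesis here. Indeed, $(\omega_n)\cap R=0$ because $\omega_n$ has zero constant term, so $\omega_n f$ has zero constant term for every $f\in R\pwseries{X}$; but $\mathfrak{m}_R^N\subseteq\mathfrak{m}^N$ is nonzero for all $N$, so $\mathfrak{m}_R^N\not\subseteq(\omega_n)$. Your heuristic about $X$ being topologically nilpotent modulo $\mathfrak{m}_R$ only shows that the $\mathfrak{m}$-adic and $\mathfrak{m}_R$-adic topologies agree on the quotient $R\pwseries{X}/(\omega_n)$; it cannot force that quotient to be $\mathfrak{m}$-adically Artinian.

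The fix is to abandon two-sided cofinality and argue directly. Your inclusion $(\omega_n)\subseteq\mathfrak{m}^{n+1}$ already gives $\bigcap_n(\omega_n)\subseteq\bigcap_n\mathfrak{m}^{n+1}=0$ by Krull's intersection theorem, so the map $R\pwseries{X}\to\varprojlim_n R\pwseries{X}/(\omega_n)$ is injective. For surjectivity, take a compatible system $(f_n)$ with $f_n\in R[X]$ of degree $<p^n$; then $f_{n+1}-f_n\in(\omega_n)\subseteq\mathfrak{m}^{n+1}$, so $(f_n)$ is $\mathfrak{m}$-adically Cauchy and converges to some $f\in R\pwseries{X}$. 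Since each ideal $(\omega_n)$ is closed in the $\mathfrak{m}$-adic topology (Noetherian complete local ring), and $f_m-f_n\in(\omega_n)$ for all $m\geq n$, the limit $f-f_n$ also lies in $(\omega_n)$. This gives surjectivity without the false cofinality.
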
 

Since $R$ is a complete noetherian regular local ring, thanks to Lemma \ref{lemma3.1} we see that $\Lambda_R$ is also a complete noetherian regular local ring with maximal ideal  
$\mathfrak{m}_{\Lambda_R}=(\mathfrak{m}_R,\gamma-1)$ of $\Lambda_R$
(\cite[Theorem 3.3, Exercise 8.6, Theorem 19.5]{Mat}).
In particular, since $R$ and $\Lambda_R$ are regular local ring, they are also UFD by 
Auslander--Buchsbaum Theorem (\cite[Theorems 20.3 and 20.8]{Mat}), and therefore 
every prime ideal of height $1$ of $R$ and $\Lambda_R$ is principal (\cite[Theorem 20.1]{Mat}),  
and $R$ and $\Lambda_R$ are integrally closed (\cite[\S 9, Example 1]{Mat}). 

Recall that a $\Lambda_R$-module $X$ is said to be \emph{pseudo-null} if 
its support $\mathrm{Supp}_{\Lambda_R}(X)$ contains only prime ideals of height at least $2$, and that two $\Lambda_R$-modules $X$ and $Y$ are said to be \emph{pseudo-isomorphic} if there exists an exact sequence 
\[0\longrightarrow A\longrightarrow X\longrightarrow Y\longrightarrow B\longrightarrow 0\]
where $A$ and $B$ are pseudo-null $\Lambda_R$-modules (\cite[Chapter VII, \S 4, no.4, Definitions 2 and 3]{Bou}).
Since $\Lambda_R$ is noetherian and integrally closed, we see from \cite[Chapter VII, \S 4, no.4, Theorem 4]{Bou} that every finitely generated $\Lambda_R$-module $M$ is pseudo-isomorphic to a $\Lambda_R$-module of the form $T\times Q$, where $T$ is the maximal torsion $\Lambda_R$-submodule of $M$ and $Q$ is a free $\Lambda_R$-module. 
By  \cite[Chapter VII, \S 4, no.4, Theorem 5]{Bou}, we know that $T$ is isomorphic to $\oplus_{i=1}^t\Lambda_R/\p_i^{n_i}$ 
for suitable height 1 prime ideals $\p_i$ of $\Lambda_R$ and integers $n_i\geq 1$;  
moreover, since every prime ideal of $\Lambda_R$ is principal, there are prime (hence irreducible) elements $g_i\in\Lambda_R$ such that  
$T\simeq\oplus_{i=1}^s \Lambda_R/g_i^{n_i}\Lambda_R$. Define the \emph{characteristic ideal} $\Char_{\Lambda_R}(M)$ 
of $M$ to be $0$ if $Q\neq 0$ and 
\[\Char_{\Lambda_R}(M)=\left(\prod_{i=1}^sg_i^{n_i}\right)\]
otherwise.

\begin{lemma}\label{lemma3.3}
 Let $Q$ be a finitely generated $\Lambda_R$-module and $\p=(g )$ a principal prime ideal of $\Lambda_R$. Assume that $Q/\p Q$ is pseudo-null. Then the $\p$-torsion $\Lambda_R$-submodule 
$Q[\p]$ of $Q$ is isomorphic to $S[\p]$, where $S$ is the maximal pseudo-null $\Lambda_R$-submodule of $Q$. 
\end{lemma}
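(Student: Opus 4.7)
The strategy is to reduce the lemma to a single structural claim: that $Q[\p]$ is itself pseudo-null as a $\Lambda_R$-module. Once this is shown, $Q[\p]\subseteq S$ by maximality of the pseudo-null submodule $S$, and since every element of $Q[\p]$ is annihilated by $\p$, we obtain $Q[\p]\subseteq S[\p]$; the reverse inclusion $S[\p]\subseteq Q[\p]$ is immediate from $S\subseteq Q$. Thus the whole content of the lemma is the implication
\[
Q/\p Q \text{ pseudo-null} \;\Longrightarrow\; Q[\p] \text{ pseudo-null}.
\]

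To establish this implication I would argue support-theoretically. Every prime in $\mathrm{Supp}_{\Lambda_R}(Q[\p])$ must contain $\mathrm{Ann}_{\Lambda_R}(Q[\p])$, hence contains $\p$. Since $\p$ is a principal prime of height one, the only candidate for a minimal prime of height $\leq 1$ in this support is $\p$ itself. Consequently it suffices to prove that $(Q[\p])_\p=0$.

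For this I would localize at $\p$. By Auslander--Buchsbaum the regular local ring $\Lambda_R$ is a UFD, and the fact (already recorded in the text) that height-one primes of $\Lambda_R$ are principal makes $\Lambda_{R,\p}$ a discrete valuation ring with uniformiser $g$. The hypothesis that $Q/\p Q$ is pseudo-null translates, after localizing at $\p$, into $(Q/\p Q)_\p=0$, i.e.\ $Q_\p = gQ_\p$. Since $Q_\p$ is a finitely generated module over the local ring $\Lambda_{R,\p}$, Nakayama's lemma forces $Q_\p=0$, so in particular $(Q[\p])_\p=0$, as required.

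The argument is essentially formal, given the structural properties of $\Lambda_R$ collected earlier in the section. There is no serious obstacle; the only thing one must be careful about is keeping the two roles of $\p$ distinct—on one hand, $\p$ is the ideal defining the torsion submodule $Q[\p]$, and on the other, it is a height-one prime at which we localize in order to reduce the pseudo-nullity of $Q[\p]$ to a Nakayama argument over a DVR.
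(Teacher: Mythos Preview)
Your proof is correct and takes a genuinely different route from the paper's. The paper invokes the structure theorem for finitely generated $\Lambda_R$-modules, writing a pseudo-isomorphism from $Q$ to $M = U \oplus \bigoplus_i \Lambda_R/g_i^{n_i}\Lambda_R$ with kernel $S$, and then argues by contradiction that no $(g_i)$ can equal $\p$ (otherwise $M/\p M$, and hence $Q/\p Q$, would fail to be pseudo-null); this makes multiplication by $g$ injective on $M$, so $Q[\p]$ coincides with $S[\p]$ via the snake lemma. Your argument bypasses the structure theorem entirely: localizing the hypothesis at $\p$ and applying Nakayama over the DVR $\Lambda_{R,\p}$ yields $Q_\p = 0$ outright, from which the pseudo-nullity of $Q[\p]$---and hence the equality $Q[\p] = S[\p]$---is immediate. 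Your route is shorter and more conceptual, and in fact proves the slightly stronger statement $Q_\p = 0$; the paper's approach has the minor expository virtue of displaying concretely why $\p$ is absent from the elementary divisors of $Q$, though this is equivalent to $Q_\p = 0$ and is not used elsewhere.
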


\begin{proof}
The theory of $\Lambda_R$-modules recalled above shows the existence of an exact sequence 
\begin{equation}\label{pseudo}
0\longrightarrow S\longrightarrow Q\longrightarrow M=U\oplus\left(\bigoplus_{i=1}^s \Lambda_R/g_i^{n_i}\Lambda_R\right)\longrightarrow B\longrightarrow0\end{equation}
where $g_i\in\Lambda_R$ are irreducible elements, $n_i\geq 1$ are integers, $U$ is free over $\Lambda_R$, and $S$ and $B$ are pseudo-null. 
It suffices to show that the multiplication by g map is injective on $M$.

Since $U$ is torsion-free, the multiplication by $g$ map is injective on $U$. 

We now make the following observation. Suppose that $g_i\mid g$ for some $i$. 
Then $\p=( g )=(g_i)$ since $g$ is irreducible.
This,  the quotient ring $\Lambda_R/( g ,g_i^{n_i})\Lambda_R$ is isomorphic to $\Lambda_R/\p\Lambda_R$, which is not pseudo-null, and therefore $M/\p M$ is not pseudo-null. 
Since subquotients of pseudo-null $\Lambda_R$-modules are again pseudo-null, from \eqref{pseudo} we have a pseudo-isomorphism between the pseudo-null $\Lambda_R$-module $Q/\p Q$ 
and the $\Lambda_R$-module $M/\p M$ which is not pseudo-null. 
Hence, $g_i\nmid g$ for every $i$ under our assumption. 

We now study the multiplication by $g$ map on the torsion $\Lambda_R$-submodule of $M$.
Suppose that $g \cdot [m]=0$ for some class $[m]\in\Lambda_R/g_i^{n_i}\Lambda_R$, where $m\in\Lambda_R$. Then $g \cdot m$ belongs to $g_i^{n_i}$. Since $g_i^{n_i}\mid g \cdot  m$ and $g_i\nmid g$, we conclude that 
$g_i^{n_i}\mid m$, so $[m]=0$. Thus the multiplication by $g$ map is injective on $\Lambda_R/g_i^{n_i}\Lambda_R$. We conclude that the multiplication by $g$ map $M\overset {\times g} \rightarrow M$ is injective, and therefore the $\p$-torsion $\Lambda_R$-submodule $Q[\p]$ of $Q$ is isomorphic to the $\p$-torsion $\Lambda_R$-submodule of $S$, as was to be shown. 
\end{proof}

\subsection{Selmer groups over Iwasawa algebras} \label{sec 2.2} 
Let 
$F$ be an algebraic number field. For each place $v$ of $F$, 
denote $F_v$ the completion of $F$ at $v$ and $\mathcal{O}_{F_v}$ 
the valuation ring of $F_v$. Define $G_F=\Gal(\bar{F}/F)$ and 
$G_{F_v}=\Gal(\bar{F}_v/F_v)$. Let $I_{F_v}$ the inertia subgroup 
of $G_{F_v}$. We will also write $\mathcal{O}_v=\mathcal{O}_{F_v}$, $I_v=I_{F_v}$ and $G_v=G_{F_v}$ 
when the fields involved are clear from the context. Recall that $F_\infty/F$ is a fixed $\Z_p$-extension of $F$, unramified outside $p$ and totally ramified at $p$, $G_\infty=\Gal(F_\infty/F)$ and
$F_n$ is the subfield of $F_\infty$ such that 
$G_n=\Gal(F_n/F)\simeq\Z/p^n\Z$; finally, recall that 
$\Lambda_R=R[\![G_\infty]\!]$. 

Let $\T$ be finite free $\Lambda_R$-module equipped with a continuous 
action of $G_F$, and fix a prime number $p$ a prime number. Let $\Sigma_p$ denote the set of places of $F$ dividing $p$, and let $\Sigma$ be a finite set of places of $F$ 
containing $\Sigma_p$. We assume that $\T$ is unramified outside $\Sigma$.  
Moreover, for each $v\mid p$ a prime of the 
ring of integers $\mathcal{O}$ of $F$, we suppose given a filtration 
\begin{equation}\label{filtration}
0\longrightarrow F_v^+(\T)\longrightarrow \T\longrightarrow F^-_v(\T)\longrightarrow0\end{equation}
of $G_v=\Gal(\bar{F}_v/F_v)$-modules.

\begin{remark}
For the moment, we do not impose any condition to the filtration \eqref{filtration}, but of course the structure of the Selmer group defined below depends on this choice. In the applications, the filtration \eqref{filtration} is made of $\Lambda_R$-modules $F_v^+(\T)$ and $ F^-_v(\T)$ which are both free of rank $1$, 
and the Galois action on each of them is characterised by a pair of characters, one unramified and the other factorising through the cyclotomic $\Z_p$-extension of $F$. See \S\ref{section shapiro} for details. 
\end{remark}  

Taking $\Phi$ (\emph{i.e.} tensoring over $\Lambda_R$ with $\Lambda_R^\vee$) we also get a filtration 
\[0\longrightarrow F_v^+(\A)\longrightarrow \A\longrightarrow F^-_v(\A)\longrightarrow0.\]
Define the \emph{Greenberg Selmer group of $\A$} (relative to the chosen filtrations \eqref{filtration}) by
\[\Sel(F,\A)=\ker\left(H^1(F,\A)\longrightarrow \prod_{v\not\in\Sigma_p}H^1(I_v,\A)\times\prod_{v\in\Sigma_p} H^1(I_v,\A/F^+_v(\A))\right)\]
and the \emph{strict Greenberg Selmer group of $\A$} (relative to the chosen filtrations \eqref{filtration}) by
\[\Sel_\mathrm{str}(F,\A)=\ker\left(H^1(F,\A)\longrightarrow \prod_{v\not\in\Sigma_p}H^1(I_v,\A)\times \prod_{v\in\Sigma_p}H^1(F_v,\A/F^+_v(\A))\right)\]
where $I_v$ is the inertia subgroup of $G_v$. 

Let $\q=(g)\subseteq \Lambda_R$ be a principal ideal and assume that $\Lambda_R/\q\Lambda_R$ is finite 
and flat over $R$. Since $\Lambda_R/\q\Lambda_R$ is flat over $R$, tensoring over $R$ with $\Lambda_R/\q\Lambda_R$ we also have a filtration 
\[0\longrightarrow F_v^+(\T/\q \T)\longrightarrow \T/\q \T\longrightarrow F^-_v(\T/\q \T)\longrightarrow0\]
where $\T/\q \T=\T\otimes_{\Lambda_R}\Lambda_R/\q\Lambda_R=T\otimes_R\Lambda_R/\q\Lambda_R$. 
We also have a filtration 
\[0\longrightarrow F_v^+(\A[\q])\longrightarrow \A[\q]\longrightarrow F^-_v(\A[\q])\longrightarrow0\]
where $F_v^+(\A[\q])=A[\q]\cap F^+_v(\A)$. 
Define the \emph{Greenberg Selmer group of $\A[\q]$} (relative to the chosen filtrations \eqref{filtration}) by
\[\Sel(F,\A[\q])=\ker\left(H^1(F,\A[\q])\longrightarrow \prod_{v\not\in\Sigma_p}H^1(I_v,\A[\q])\times \prod_{v\in\Sigma_p}H^1(I_v,\A[\q]/F^+_v(\A[\q]))\right)\]
and the \emph{strict Greenberg Selmer group of $\A$}  (relative to the chosen filtrations \eqref{filtration}) by
\[\Sel_\mathrm{str}(F,\A[\q])=\ker\left(H^1(F,\A[\q])\longrightarrow \prod_{v\not\in\Sigma_p}H^1(I_v,\A[\q])\times \prod_{v\in\Sigma_p}H^1(F_v,\A/F^+_v(\A[\q]))\right).\]


\subsection{The control theorem} Let the notation be as in 
\S\ref{sec 2.2}. Let $\q=(g)\subseteq \Lambda_R$ be a principal ideal and assume that $\Lambda_R/\q\Lambda_R$ is finite 
and flat over $R$.
Then we have canonical maps 
\[r_\q:\Sel(F,\A[\q])\longrightarrow \Sel(F,\A)[\q],\]
\[r_\q^\mathrm{str}:\Sel_\mathrm{str}(F,\A[\q])\longrightarrow \Sel_\mathrm{str}(F,\A)[\q] . \]

\begin{proposition}\label{prop3.4}
Assume that $H^0(F,\A[\q])^\vee$ is a pseudo-null $\Lambda_R$-module. 
Then $\mathrm{ker} ( r_\q )^\vee$ and $ \mathrm{ker} ( r_\q^\mathrm{str} )^\vee$ are also pseudo-null $\Lambda_R$-modules, and are contained in the $\q$-torsion subgroup of the maximal pseudo-null $\Lambda_R$-submodule of $(\T^*)_{G_F}$. 
\end{proposition}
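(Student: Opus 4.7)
The plan is to reduce the problem to a Kummer-style analysis of $H^0(F,\A)$ and then invoke Lemma \ref{lemma3.3} applied to $Q=(\T^*)_{G_F}$.

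First, I would observe that $\A$ is $g$-divisible. Since $\Lambda_R$ is a domain and $g\neq 0$, multiplication by $g$ is injective on $\Lambda_R$; by exactness of Pontryagin duality it is surjective on $\Lambda_R^\vee$, and tensoring with the flat $\Lambda_R$-module $\T$ shows multiplication by $g$ is surjective on $\A=\T\otimes_{\Lambda_R}\Lambda_R^\vee$. Thus we have a short exact sequence of $G_F$-modules
\[
0\longrightarrow\A[\q]\longrightarrow\A\stackrel{g}{\longrightarrow}\A\longrightarrow 0,
\]
whose long exact sequence in $G_F$-cohomology identifies the kernel of $H^1(F,\A[\q])\to H^1(F,\A)$ with $H^0(F,\A)/gH^0(F,\A)$, and also yields the tautological equality $H^0(F,\A[\q])=H^0(F,\A)[\q]$.

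Next, I would argue that both $\ker(r_\q)$ and $\ker(r_\q^{\mathrm{str}})$ inject into $H^0(F,\A)/gH^0(F,\A)$. Indeed, $r_\q$ and $r_\q^{\mathrm{str}}$ are nothing but the restrictions to the respective Selmer submodules of the global map $H^1(F,\A[\q])\to H^1(F,\A)$ (the image automatically lies in the $\q$-torsion because elements of $\Sel(F,\A[\q])$ are $g$-killed); so any element in either kernel already lies in $\ker(H^1(F,\A[\q])\to H^1(F,\A))$. No local analysis is required here, because we only need an injection, not an isomorphism, onto $H^0(F,\A)/gH^0(F,\A)$. Dualizing, and using the general identifications $\A^\vee\simeq\T^*$ and $H^0(F,\A)^\vee=(\A^\vee)_{G_F}\simeq (\T^*)_{G_F}$ which follow from the duality formalism recalled at the start of the section, I obtain surjections
\[
(\T^*)_{G_F}[\q]\twoheadrightarrow\ker(r_\q)^\vee,\qquad (\T^*)_{G_F}[\q]\twoheadrightarrow\ker(r_\q^{\mathrm{str}})^\vee.
\]

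Finally I translate the hypothesis. From $H^0(F,\A[\q])=H^0(F,\A)[\q]$ and Pontryagin duality, $H^0(F,\A[\q])^\vee\simeq(\T^*)_{G_F}/\q(\T^*)_{G_F}$, so the hypothesis says exactly that $Q/\q Q$ is pseudo-null for $Q=(\T^*)_{G_F}$. Lemma \ref{lemma3.3} then produces an isomorphism $(\T^*)_{G_F}[\q]\simeq S[\q]$, where $S$ is the maximal pseudo-null $\Lambda_R$-submodule of $(\T^*)_{G_F}$. Combining with the previous step, $\ker(r_\q)^\vee$ and $\ker(r_\q^{\mathrm{str}})^\vee$ are quotients of the pseudo-null module $S[\q]$; since submodules and quotients of pseudo-null $\Lambda_R$-modules are pseudo-null, both are pseudo-null, and they are contained — in the sense of the statement — in the $\q$-torsion of the maximal pseudo-null submodule of $(\T^*)_{G_F}$.

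The main conceptual point, and the place where the hypothesis really bites, is the application of Lemma \ref{lemma3.3}: the hypothesis on $H^0(F,\A[\q])^\vee$ is precisely the input Lemma \ref{lemma3.3} needs to conclude that the $\q$-torsion of $(\T^*)_{G_F}$ sits inside the maximal pseudo-null submodule. I expect no serious obstacle beyond keeping the duality identifications straight and checking that the cokernel of $g$ on $\A$ truly vanishes (which is why the freeness of $\T$ over $\Lambda_R$ is used here).
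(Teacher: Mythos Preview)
Your proof is correct and follows essentially the same approach as the paper: both reduce to showing that $\ker(r_\q)$ injects into $H^0(F,\A)/\q H^0(F,\A)$ via the Kummer sequence, then dualize to identify this with $(\T^*)_{G_F}[\q]$ and apply Lemma~\ref{lemma3.3}. You spell out the $g$-divisibility of $\A$ more explicitly than the paper does, but the structure of the argument is the same.
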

\begin{proof} We explain the argument only for $r_\q$; the case of $r_\q^\mathrm{str}$ is verbatim. 

We have the following commutative diagram:
\[\xymatrix{
& \Sel(F,\A[\q])\ar[r]^-{r_\q}\ar@{^(->}[d]& \Sel(F,\A)[\q] \ar@{^(->}[d]\\
H^0(F,\A)/\q H^0(F,\A)\ar[r] & H^1(F,\A[\q])\ar[r] & H^1(F,\A)[\q] 
}
\]
therefore it is enough to show that $H^0(F,\A)/\q H^0(F,\A)$ is a pseudo-null $\Lambda_R$-module, and that it is contained in the $\q$-torsion subgroup of the maximal pseudo-null $\Lambda_R$-submodule of $(\T^*)_{G_F}$.

Note that $H^0(F,\A[\q])=H^0(F,\A)[\q]$ is the Pontryagin dual of $(\T^*)_{G_F}/\q(\T^*)_{G_F}$, and that 
$H^0(F,\A)/\q H^0(F,\A)$ is the Pontryagin dual of $(\T^*)_{G_F}[\q]$. 
Since $H^0(F,\A[\q])^\vee$ is a pseudo-null $\Lambda_R$-module by assumption, 
applying Lemma \ref{lemma3.3} to the $\Lambda_R$-module 
$(\T^*)_{G_F}$ we see that $H^0(F,\A)/\q H^0(F,\A)$ has also pseudo-null Pontryagin dual, 
contained in the $\q$-torsion subgroup of the maximal pseudo-null $\Lambda_R$-submodule of $(\T^*)_{G_F}$.
\end{proof}

For $v\in\Sigma$, define 
\[C_v=\begin{cases}
\text{$\Lambda_R$-torsion submodule of the module $((\T/F_v^+(\T))^*)_{I_v}$ if $v\in\Sigma_p$},\\
\text{$\Lambda_R$-torsion submodule of the module $(\T^*)_{I_v}$ if $v\in\Sigma-\Sigma_p$.} 
\end{cases}\]
\[C_v^\mathrm{str}=\begin{cases}
\text{$\Lambda_R$-torsion submodule of the module $((\T/F_v^+(\T))^*)_{G_v}$ if $v\in\Sigma_p$},\\
\text{$\Lambda_R$-torsion submodule of the module $(\T^*)_{I_v}$ if $v\in\Sigma-\Sigma_p$.} 
\end{cases}\]

Denote $F_\Sigma$ the maximal extension of $F$ which is unramified outside $\Sigma$.
\begin{proposition}\label{prop3.6}
Assume that 
\begin{itemize}
\item  The $\Lambda_R$-module $C_v/\q C_v$ ($C_v^\mathrm{str}/\q C_v^\mathrm{str}$, respectively) is pseudo-null 
for each $v\in\Sigma$; 
\item $H^0(F_\Sigma/F,\A[\q])^\vee$  is pseudo-null.
\end{itemize}
Then 
$\mathrm{coker} ( r_\q )^\vee$ ($\mathrm{coker} ( r_\q^\mathrm{str} )^\vee$, respectively) is a pseudo-null $\Lambda_R$-module. 
\end{proposition}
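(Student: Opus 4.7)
The plan is to mimic the proof of Proposition~\ref{prop3.4} for the cokernel: set up the natural commutative diagram between the defining sequences of $\Sel(F,\A[\q])$ and $\Sel(F,\A)[\q]$, apply the snake lemma, and reduce the control of $\mathrm{coker}(r_\q)^\vee$ (and of $\mathrm{coker}(r_\q^\mathrm{str})^\vee$) to a local analysis at each $v \in \Sigma$ which will be closed by Lemma~\ref{lemma3.3}.

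Concretely, one works with the diagram whose rows are the exact sequences
\[
0 \longrightarrow \Sel(F,\A[\q]) \longrightarrow H^1(F_\Sigma/F,\A[\q]) \longrightarrow \prod_{v\in\Sigma} L_v(\A[\q])
\]
\[
0 \longrightarrow \Sel(F,\A)[\q] \longrightarrow H^1(F_\Sigma/F,\A)[\q] \longrightarrow \prod_{v\in\Sigma} L_v(\A)[\q],
\]
where $L_v(-)$ denotes $H^1(I_v,-)$ for $v \notin \Sigma_p$ and either $H^1(I_v,-/F_v^+(-))$ or $H^1(F_v,-/F_v^+(-))$ for $v \in \Sigma_p$ (according to the ordinary or strict variant), with vertical maps $r_\q$, $s$, and $\prod_v t_v$. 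The crucial input is that $s$ is surjective: since $g$ is a non-zero-divisor on $\Lambda_R$, multiplication by $g$ on $\Lambda_R^\vee$ is surjective by Pontryagin duality, and tensoring with the free $\Lambda_R$-module $\T$ preserves this, so $0 \to \A[\q] \to \A \stackrel{g}{\to} \A \to 0$ is exact and the resulting long exact $G_{F,\Sigma}$-cohomology sequence forces $\mathrm{coker}(s) = 0$. Applying the snake lemma to the restriction of the diagram in which the rightmost column is replaced by the image of the localisation maps, and using, exactly as in Proposition~\ref{prop3.4}, the hypothesis on $H^0(F_\Sigma/F,\A[\q])^\vee$ to bound $\ker(s)^\vee$ by a pseudo-null module, one realises $\mathrm{coker}(r_\q)$ as a subquotient of $\prod_{v\in\Sigma} \ker(t_v)$.

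It then suffices to show that each $\ker(t_v)^\vee$ is pseudo-null, and this is where Lemma~\ref{lemma3.3} comes in. For $v \in \Sigma-\Sigma_p$, the long exact $I_v$-cohomology sequence attached to $0 \to \A[\q] \to \A \stackrel{g}{\to} \A \to 0$ identifies $\ker(t_v)$ with the quotient $H^0(I_v,\A)/g\,H^0(I_v,\A)$; Pontryagin duality, together with the identification $(\A^{I_v})^\vee = (\T^*)_{I_v}$, yields $\ker(t_v)^\vee \cong ((\T^*)_{I_v})[\q]$. Since the torsion-free part of the finitely generated $\Lambda_R$-module $(\T^*)_{I_v}$ carries no $\q$-torsion, this module is pseudo-isomorphic to $C_v[\q]$; the hypothesis $C_v/\q C_v$ pseudo-null then combines with Lemma~\ref{lemma3.3} applied to $Q = C_v$ to force $C_v[\q]$ to lie in the $\q$-torsion of the maximal pseudo-null submodule of $C_v$, which is pseudo-null. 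The case $v \in \Sigma_p$, both in the ordinary and strict variants, is handled by the same argument applied to $\A/F_v^+(\A)$ in place of $\A$, to $(\T/F_v^+(\T))^*$ in place of $\T^*$, and to $C_v$ or $C_v^\mathrm{str}$ accordingly; here one uses that $\A/F_v^+(\A)$ inherits $g$-divisibility from $\A$ as a quotient of a $g$-divisible module. Finiteness of $\Sigma$ preserves pseudo-nullness under the product, completing the argument.

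The main technical obstacle is the careful verification of the compatibility between the filtration $F_v^\pm$ and the passage to $\q$-torsion, so that the local long exact sequences identifying $\ker(t_v)$ with a quotient of $H^0$ remain valid: one needs $F_v^+(\A[\q]) = F_v^+(\A)[\q]$ (a direct consequence of the definition $F_v^+(\A[\q]) = \A[\q] \cap F_v^+(\A)$) together with sufficient $g$-divisibility of $F_v^+(\A)$ so that $\A[\q]/F_v^+(\A)[\q]$ and $(\A/F_v^+(\A))[\q]$ agree up to a pseudo-null discrepancy. In the Hida-theoretic applications this is automatic, since $F_v^+(\T)$ is a $\Lambda_R$-direct summand of $\T$; outside this setting some additional care is required. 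Apart from this subtlety, the remainder is a formal diagram chase and dualisation paralleling Proposition~\ref{prop3.4}.
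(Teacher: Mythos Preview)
Your argument is essentially the paper's own proof: set up the standard commutative diagram, use $g$-divisibility of $\A$ to get surjectivity of the middle vertical map $s$, apply the snake lemma to bound $\mathrm{coker}(r_\q)$ by a subquotient of $\ker(t_\q)$, identify $\ker(t_\q)^\vee$ with $\bigoplus_{v\in\Sigma} C_v[\q]$, and invoke Lemma~\ref{lemma3.3}. Two minor simplifications: you do not actually need the second hypothesis to control $\ker(s)$ here---once $s$ is surjective the snake lemma already exhibits $\mathrm{coker}(r_\q)$ as a subquotient of $\ker(t_\q)$ irrespective of $\ker(s)$---and $((\T^*)_{I_v})[\q] = C_v[\q]$ on the nose (any $g$-torsion element is $\Lambda_R$-torsion), so no pseudo-isomorphism is required at that step.
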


\begin{proof}We do the proof only for $r_\q$; 
the case of $r_\q^\mathrm{str}$ is verbatim. 

Recall that 
\[H^1(F_\Sigma/F,\A)=\ker\left(H^1(F,\A)\longrightarrow \prod_{v\not\in\Sigma} H^1(I_v,\A)\right)\]
as a submodule of $H^1(F,\A)$. It follows that there exists a commutative diagram: 
{\footnotesize{\[\xymatrix{
0\ar[r] & 
\Sel(F,\A[\q])\ar[r]\ar[r]\ar[d]^-{r_\q} & 
H^1(F_\Sigma/F,\A[\q])\ar[r]^-{\gamma_\q}\ar[d]^-{s_\q}& 
\prod_{p\in\Sigma_p}H^1(I_v,(\A/F^+_v(\A))[\q])
\times\prod_{v\in\Sigma-\Sigma_p}H^1(F_v,\A[\q])\ar[d]^-{t_\q} \\
0\ar[r]& 
\Sel(F,\A)[\q]\ar[r] & 
H^1(F_\Sigma/F,\A)[\q] \ar[r] & 
\prod_{p\in\Sigma_p}H^1(I_v,(\A/F^+_v(\A)))[\q]
\times\prod_{v\in\Sigma-\Sigma_p}H^1(F_v,\A)[\q]
}
\]}} where the vertical arrows are restriction maps. The multiplication by $g$ map induces an exact sequence 
\[0\longrightarrow \mathbf{A}[\mathfrak{q}]\longrightarrow\mathbf{A}\overset{g}\longrightarrow\mathfrak{q}\mathbf{A}
\longrightarrow 0\] 
which shows that map $s_\mathfrak{q}$ is surjective. Therefore by the snake lemma the cokernel of $r_\mathfrak{q}$ 
is a subquotient of the kernel of $t_\mathfrak{q}$. Therefore, it is enough to show that 
the Pontryagin dual $\ker(t_\mathfrak{q})^\vee$ of $\ker(t_\mathfrak{q})$ is pseudo-null. 
%
%
The module $\ker(t_\q)$ is isomorphic to
{\footnotesize{
\begin{align*}
\ker(t_\mathfrak{a}) &\simeq\mathrm{coker}\left(
\prod_{p\in\Sigma_p}H^0(I_v,\A/F^+_v(\A))
\times\prod_{v\in\Sigma-\Sigma_p}H^0(F_v,\A)\overset g\longrightarrow 
\prod_{p\in\Sigma_p}H^0(I_v,\A/F^+_v(\A))
\times\prod_{v\in\Sigma-\Sigma_p}H^0(F_v,\A)
\right) \\
&  \simeq \ker\left(
\prod_{v\in\Sigma-\Sigma_p}(\T^*)_{I_v}\times\prod_{v\in\Sigma_p}((\T/F^+_v(\T))^*)_{G_v}\overset g\longrightarrow 
\prod_{v\in\Sigma_p}((\T/F^+_v(\T))^*)_{G_v}
\times\prod_{v\in\Sigma-\Sigma_p}(\T^*)_{I_v}\right)^\vee.
\end{align*}
} }  
Hence, the module $\ker(t_\q) $ is equal to $ \left(  \oplus_{v\in\Sigma}C_v[\q] \right)^\vee$, by definition.
On the other hand, $\oplus_{v\in\Sigma}C_v/\q C_v$ is pseudo-null by assumption, 
and therefore Lemma \ref{lemma3.3} applied to the module $\oplus_{v\in\Sigma}C_v[\q]$ completes the proof.
\end{proof}

\begin{theorem}\label{CT} Let $\q=(g)$ be a principal ideal of $\Lambda_R$. 
Assume that 
$H^0(F_\Sigma/F,\A[\q])^\vee$ is pseudo-null 
and that the $\Lambda_R$-module $C_v/\q C_v$ ( $C_v^\mathrm{str}/\q C_v^\mathrm{str}$, respectively) is pseudo-null for each $v\in\Sigma$. 
Then  $\mathrm{ker} ( r_\q )^\vee$ and $\mathrm{coker} ( r_\q )^\vee$ ($\mathrm{ker} ( r_\q^\mathrm{str} )^\vee$ and $\mathrm{coker} ( r_\q^\mathrm{str} )^\vee$, respectively) are pseudo-null $\Lambda_R$-modules. 
\end{theorem}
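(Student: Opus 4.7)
The plan is to observe that Theorem \ref{CT} is essentially the combination of Propositions \ref{prop3.4} and \ref{prop3.6}, applied simultaneously to $r_\q$ and $r_\q^\mathrm{str}$, once the hypotheses are reconciled. No new global argument is required; the entire content has been extracted in the two propositions, whose proofs rely on the module-theoretic Lemma \ref{lemma3.3}.

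First I would handle the kernel assertions. Proposition \ref{prop3.4} requires the Pontryagin dual of $H^0(F,\A[\q])$ to be pseudo-null, whereas the theorem's assumption is phrased in terms of $H^0(F_\Sigma/F,\A[\q])^\vee$. Since $\T$ is unramified outside $\Sigma$ by hypothesis, so is $\A[\q]$, and therefore the action of $G_F$ on $\A[\q]$ factors through $\Gal(F_\Sigma/F)$. This yields the identification
\[
H^0(F,\A[\q])=H^0(F_\Sigma/F,\A[\q]),
\]
and the hypothesis on $H^0(F_\Sigma/F,\A[\q])^\vee$ translates into exactly the assumption needed to invoke Proposition \ref{prop3.4}. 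Applying this proposition to both $r_\q$ and $r_\q^\mathrm{str}$ immediately gives that $\ker(r_\q)^\vee$ and $\ker(r_\q^\mathrm{str})^\vee$ are pseudo-null $\Lambda_R$-modules.

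For the cokernel assertions, I would simply invoke Proposition \ref{prop3.6} in both its Selmer and strict Selmer variants. Its two hypotheses are, respectively, the pseudo-nullity of $C_v/\q C_v$ (resp. $C_v^\mathrm{str}/\q C_v^\mathrm{str}$) for every $v\in\Sigma$, and the pseudo-nullity of $H^0(F_\Sigma/F,\A[\q])^\vee$: these are precisely the standing assumptions of the theorem. Hence $\coker(r_\q)^\vee$ and $\coker(r_\q^\mathrm{str})^\vee$ are pseudo-null as well.

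There is essentially no obstacle at the level of this theorem: all the real work has been done in the two preceding propositions, in particular the snake-lemma diagram chase combined with Lemma \ref{lemma3.3}, which controls how $\q$-divisibility and $\q$-torsion interact with pseudo-nullity in the structure theorem for finitely generated $\Lambda_R$-modules. The only conceptual point to flag is the matching of the two different flavours of $H^0$, which is handled by the unramifiedness outside $\Sigma$ noted above.
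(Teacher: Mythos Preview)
Your proposal is correct and follows exactly the paper's approach: the theorem is obtained by combining Propositions \ref{prop3.4} and \ref{prop3.6}, after matching the hypothesis on $H^0(F_\Sigma/F,\A[\q])^\vee$ with the one on $H^0(F,\A[\q])^\vee$ needed for Proposition \ref{prop3.4}. Your justification of this matching via unramifiedness outside $\Sigma$ (yielding an actual equality of the two $H^0$'s) is slightly more explicit than the paper's one-line observation, but the argument is the same.
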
 

\begin{proof} Observe that if $H^0(F_\Sigma/F,\A[\q])^\vee$ is pseudo-null 
the same is true for $H^0(F,\A[\q])^\vee$. 
The result then follows combining Proposition \ref{prop3.4} and Proposition \ref{prop3.6}.\end{proof}

\subsection{Shapiro's Lemma} \label{section shapiro}
Let the notation be as in \S\ref{sec 2.2}; thus, $F$ is a number field 
and $F_\infty/F$ is a $\Z_p$-extension, with finite layers $F_n$, 
totally ramified at $p$ and unramified outside $p$.  
Let $T$ be a finite free $R$-module equipped with a continuous 
action of $G_F=\Gal(\bar{F}/F)$ and fix a filtration 
\begin{equation}\label{filtration1}
0\longrightarrow F_v^+(T)\longrightarrow T\longrightarrow F^-_v(T)\longrightarrow0\end{equation}
of $G_v=\Gal(\bar{F}_v/F_v)$-modules, where $F_v$ is the completion of $F$ at $v$. 
Denote $F_v(\mu_{p^\infty})/F_v$ be the cyclotomic extension of $F_v$, 
where $\mu_{p^\infty}$ is the $p$-divisible group of roots of unity in 
$\bar{F}_v$. 
Let $\Sigma_p$ denote the set of places of $F$ dividing $p$, and let $\Sigma$ be a finite set of places of $F$ containing $\Sigma_p$; denote $F_\Sigma/F$ the maximal 
extension of $F$ which is unramified outside $\Sigma$.  
\begin{assumption}\label{ass} We suppose that the following conditions are satisfied.  
\begin{enumerate}
\item $T$ is unramified outside $\Sigma$. 
\item $H^0(F_\Sigma/F_n,A)^\vee$ is pseudo-null. 
\item Both $F^+(T)$ and $F^-(T)$ are free $R$-modules.  
\item For each $v\mid p$, there are characters $\delta_v,\theta_v:G_v\rightarrow R^\times$ such that 
\begin{itemize}
\item $\delta_v$ is unramified and 
takes the Frobenius $\Frob_v$ to $\delta_v(\Frob_v)=u_v$ 
with $u_v\not\equiv1$ modulo the maximal ideal $\mathfrak{m}_R$ of $R$.
\item $\theta_v$ factors through $G_v\rightarrow \Gal(F_v(\mu_{p^\infty})/F_v)$.
\item $G_v$ acts on $F^-_v(T)$ 
via multiplication by the product $\delta_v\cdot\theta_v$.
\end{itemize}
\end{enumerate}
\end{assumption}

Define 
\[A=\Phi(T)=T\otimes_{R}R^\vee.\]  
The filtration $F^+_v(T)\subseteq T$ induces a filtration 
$F^+_v(A)\subseteq A$ of $A$. For each integer $n\geq0$ and any prime ideal $v$ of $F_n$, let $F_{n,v}$ be the completion of $F_n$ at $v$. 
Denote $\Sigma_{n,p}$ the set of places of $F_{n,v}$ above $p$ and 
define  
\[\Sel_\mathrm{str}(F_n,A)=\ker\left(
H^1(F_n,A)\longrightarrow \prod_{v\not\in\Sigma_{n,p}}H^1(I_{n,v},A)\times \prod_{v\in\Sigma_{n,p}}H^1(F_{n,v},A/F^+_v(A))
\right)\] and 
\[\Sel_\mathrm{str}(F_\infty,A)=\dirlim_n\Sel_\mathrm{str}(F_n,A).\]

For any character $\chi:G_\infty\rightarrow B^\times$, where $B$ is a ring, 
and any $B$-module $M$, let $M(\chi)$ denote the $B$-module $M$ 
equipped with $G_\infty$-action given by 
$g\cdot m=\chi(g)m$. 
Let $\kappa:G_\infty\rightarrow\Lambda_R^\times$ be the tautological character. 
Note in particular that $\Lambda_R(\kappa)$ is just $\Lambda_R$ as $\Lambda_R$-module, but we prefer to keep the notation $\Lambda_R(\kappa)$ to stress that we are considering $\Lambda_R$ as a $\Lambda_R$-module and not as a ring.  
Define the $\Lambda_R$-module 
\[\T=T\otimes_{R}\Lambda_R(\kappa^{-1}).\] Since the extension of rings 
$\Lambda_R/R$ is flat (by Lemma \ref{lemma3.1} and \cite[Exercise 7.4]{Mat}) 
then, tensoring \eqref{filtration} over $R$ with $\Lambda_R$ we also have a filtration 
\[0\longrightarrow F_v^+(\T)\longrightarrow \T\longrightarrow F^-_v(\T)\longrightarrow0\]
where $F^\pm_v(\T)=F^\pm_v(T)\otimes_R\Lambda_R(\kappa^{-1})$.
Define 
\[\A=\Phi(\T)=\T\otimes_{\Lambda_R}\Lambda_R^\vee.\] 

We observe that (\emph{cf.} \cite[\S 2.9.1]{Nekovar})
\[\Lambda_R^\vee=\Hom_\cont(\Lambda_R,\Q_p/\Z_p)\simeq
\Hom_R(\Lambda_R, R^\vee ).\]
Moreover, it we stress the structure of $\Lambda_R$-modules, 
we have  
\[\Lambda_R^\vee(\kappa)\simeq 
\Hom_R(\Lambda_R(\kappa^{-1}), R^\vee ),\qquad
\Lambda_R^\vee(\kappa^{-1})\simeq 
\Hom_R(\Lambda_R(\kappa), R^\vee ).\] 
where we use the standard action of $\Lambda_R$ on $\Hom_R(\Lambda_R, R^\vee )$ given by $(\lambda\cdot\varphi)(x)=\varphi(\lambda^{-1}x)$ for $\lambda\in\Lambda_R$ and 
$\varphi\in\Hom(\Lambda_R, R^\vee )$. 

Note that, since $T$ is a free $R$-module, 
we have isomorphisms of $\Lambda_R$-modules: 
\[\begin{split}
\A&=\T\otimes_{\Lambda_R} \Lambda_R^\vee\\
&=(T\otimes_R\Lambda_R(\kappa^{-1}))\otimes_{\Lambda_R}\Hom_\cont(\Lambda_R(\kappa^{-1}),\Q_p/\Z_p)\\
&=(T\otimes_R\Lambda_R(\kappa^{-1}))\otimes_{\Lambda_R}\Hom_R(\Lambda_R(\kappa^{-1}),R^\vee )\\
&=T\otimes_R\Hom_R(\Lambda_R,R^\vee )\\
&=\Hom_R(\Lambda_R,A)
\end{split}\] 

We now concentrate on ideals $\q_n$ generated by elements $\omega_n=\gamma^{p^n}-1$:
\[\q_n=(\omega_n)=(\gamma^{p^n}-1),\] where $\gamma$ is a topological generator of $G_\infty$. 
We have isomorphisms of $\Lambda_R/\q_n\Lambda_R$-modules 
\[\begin{split}
\A[\q_n]&=\Hom_R(\Lambda_R(\kappa),A)[\q_n]\\
&=\Hom_R(\Lambda_R(\kappa)/\q_n\Lambda_R(\kappa),A)\\
&\simeq\Hom_R(R[G_n],A)
\end{split}
\]

\begin{lemma}\label{Shapiro}
For each integer $n\geq0$ we have 
$\Sel_\mathrm{str}(F,\A[\q_n])\simeq \Sel_\mathrm{str}(F_n,A)$. 
Moreover, we have $\Sel_\mathrm{str}(F,\A)\simeq \Sel_\mathrm{str}(F_\infty,A)$. 
\end{lemma}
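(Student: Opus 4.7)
The plan is to recognise $\A[\q_n]$ as the coinduced $G_F$-module $\mathrm{CoInd}_{G_{F_n}}^{G_F}(A)$, apply Shapiro's lemma globally and locally, and check that the filtrations defining the strict Selmer group are transported to the corresponding filtrations of $A$ over $F_n$.

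First I would upgrade the stated isomorphism $\A[\q_n]\simeq\Hom_R(R[G_n],A)$ of $\Lambda_R/\q_n\Lambda_R$-modules to a $G_F$-equivariant isomorphism. The $\kappa^{-1}$-twist built into $\T = T\otimes_R\Lambda_R(\kappa^{-1})$ produces, after passing to $\A$, the action $(\sigma\cdot\varphi)(g)=\sigma(\varphi(\bar\sigma^{-1}g))$ for $\sigma\in G_F$ with image $\bar\sigma\in G_n$ and $g\in G_n$, where on the right $\sigma$ acts through its action on $A$. With $G_{F_n}=\Gal(\bar F/F_n)=\ker(G_F\twoheadrightarrow G_n)$, this is exactly the coinduced structure $\mathrm{CoInd}_{G_{F_n}}^{G_F}(A)$.

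Second, Shapiro's lemma gives $H^i(F,\A[\q_n])\simeq H^i(F_n,A)$ globally. Locally, for each place $v$ of $F$, the double coset decomposition $G_{F_n}\backslash G_F/G_{F_v}$ is indexed by the places $w$ of $F_n$ above $v$, so that the restriction of $\mathrm{CoInd}_{G_{F_n}}^{G_F}(A)$ to $G_{F_v}$ decomposes as $\bigoplus_{w\mid v}\mathrm{CoInd}_{G_{F_{n,w}}}^{G_{F_v}}(A)$, and local Shapiro yields
\[
H^i(F_v,\A[\q_n])\simeq\prod_{w\mid v}H^i(F_{n,w},A),\qquad H^i(I_v,\A[\q_n])\simeq\prod_{w\mid v}H^i(I_w,A).
\]
By Assumption \ref{ass}(3), $F_v^\pm(T)$ is free over $R$, so $\Hom_R(R[G_n],-)$ preserves the filtration and $F_v^+(\A[\q_n])$ corresponds under these isomorphisms to $\bigoplus_{w\mid v}\mathrm{CoInd}_{G_{F_{n,w}}}^{G_{F_v}}(F_w^+(A))$, where $F_w^+(A)$ denotes $F_v^+(A)$ viewed as a $G_{F_{n,w}}$-module via $G_{F_{n,w}}\hookrightarrow G_{F_v}$. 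Hence for $v\in\Sigma_p$,
\[
H^1(F_v,\A[\q_n]/F_v^+(\A[\q_n]))\simeq\prod_{w\mid v}H^1(F_{n,w},A/F_w^+(A)).
\]

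Combining these identifications, the defining kernels of $\Sel_\mathrm{str}(F,\A[\q_n])$ and $\Sel_\mathrm{str}(F_n,A)$ are canonically identified, proving the first isomorphism; naturality of Shapiro in $n$ then lets me pass to the direct limit. Using $\A=\dirlim_n\A[\q_n]$ (coming from $\Lambda_R^\vee=\dirlim_n(\Lambda_R/\q_n\Lambda_R)^\vee$, together with the observation $\Hom_R(R[G_m],A)[\q_n]=\Hom_R(R[G_n],A)$ for $m\geq n$) and the fact that continuous Galois cohomology of discrete modules commutes with direct limits yields the infinite-level statement $\Sel_\mathrm{str}(F,\A)\simeq\Sel_\mathrm{str}(F_\infty,A)$. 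I expect the main obstacle to be the careful bookkeeping in the local step: verifying that the double coset decomposition really reproduces the product over $w\mid v$ with the correct identification of inertia subgroups and of the filtration pieces $F_w^+(A)$. Once the $G_F$-equivariant coinduced structure has been pinned down, however, this is essentially formal.
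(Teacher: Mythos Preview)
Your overall strategy---identifying $\A[\q_n]$ with the coinduced module $\Hom_R(R[G_n],A)$ and applying Shapiro's lemma globally and locally---is the same as the paper's. However, your local claim for $v\nmid p$ is not correct as stated: the isomorphism $H^i(I_v,\A[\q_n])\simeq\prod_{w\mid v}H^i(I_w,A)$ fails whenever $v$ does not split completely in $F_n/F$. Since $F_n/F$ is unramified at such $v$, the inertia group $I_v$ acts trivially on $R[G_n]$, so $\A[\q_n]\vert_{I_v}\simeq A^{\lvert G_n\rvert}$ as $I_v$-modules and hence $H^i(I_v,\A[\q_n])\simeq H^i(I_v,A)^{\lvert G_n\rvert}$; your right-hand side has only $\#\{w\mid v\}$ factors. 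The Mackey decomposition you invoke is valid at the level of $G_{F_v}$, but the further restriction of $\mathrm{CoInd}_{G_{F_{n,w}}}^{G_{F_v}}(A)$ to $I_v$ produces $[F_{n,w}:F_v]$ copies of $A$, not one.

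The paper avoids this by a short diagram chase. It sets up a commutative square
\[
\xymatrix{
\Sel_\mathrm{str}(F_n,A)\ar[r]\ar[d]^{r_n} & H^1(F_\Sigma/F_n,A)\ar[r]\ar[d]^{s_n}_{\simeq} & (\text{local conditions for }A/F_n)\ar[d]^{t_n}\\
\Sel_\mathrm{str}(F,\A[\q_n])\ar[r] & H^1(F_\Sigma/F,\A[\q_n])\ar[r] & (\text{local conditions for }\A[\q_n]/F)
}
\]
and shows only that $t_n$ is \emph{injective}. For $v\nmid p$ this uses $I_w=I_v$ to embed $\prod_{w\mid v}H^1(I_w,A)$ into $H^1(I_v,C_n(A))$; for $v\mid p$ total ramification gives a genuine local Shapiro isomorphism. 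Injectivity of $t_n$ together with $s_n$ being an isomorphism then forces $r_n$ to be an isomorphism. Your argument is easily repaired in this way, or equivalently by noting that, although the targets differ, the \emph{kernels} of the two restriction maps coincide inside $H^1(F,\A[\q_n])\simeq H^1(F_n,A)$: this is exactly the subtlety you flagged as ``the main obstacle''.
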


\begin{proof}
Shapiro's Lemma shows the the first of the following isomorphism 
\[H^1(F_n,A)\simeq H^1(F,\Hom(R[G_n],A))\simeq H^1(F,\A[\q_n]),\]
while the second follows from the previous discussion. 
Taking direct limits over $n$, we also see that  
\[\begin{split}H^1(F_\infty,A)&=\dirlim_n H^1(F_n,A)\\
&\simeq  \dirlim_nH^1(F,\Hom_R(R[G_n],A))\\
&\simeq
H^1(F,\invlim_n\Hom_R(R[G_n],A))\\
&\simeq
H^1(F,\Hom_R(\Lambda_R(\kappa),A))\\
&\simeq H^1(F,\A)\end{split}
\] where the first and the last isomorphism follow from the previous discussion. 
We need to show that, under these isomorphisms, 
$\Sel_\mathrm{str}(F_n,A)$ corresponds to $\Sel_\mathrm{str}(F,\A[\q_n])$ 
and $\Sel_\mathrm{str}(F_\infty,A)$ corresponds to 
$\Sel_\mathrm{str}(F,\A)$. 

Put $C_n(M)=\Hom_R(R[G_n],M)$ for any $R$-module $M$.  
Let $\Sigma_n$ be the set of places of $F_n$ above places in $\Sigma$. 
We have a commutative diagram: 
{\footnotesize{\[\xymatrix{\Sel_\mathrm{str}(F_n,A)\ar[r]\ar[d]^-{r_n} & 
H^1(F_\Sigma,F_n,A) \ar[r] \ar[d]^-{s_n}&
\prod_{v\in\Sigma_n,v\nmid p}H^1(I_w,A)\times\prod_{v\in\Sigma_n,v\mid p}H^1(F_{n,w},A/F^+_w(A))\ar[d]^-{t_n} \\
\Sel_\mathrm{str}(F,C_n(A))\ar[r]&
H^1(F_\Sigma/F,C_n(A))\ar[r] & 
\prod_{v\in\Sigma,v\nmid p}H^1(I_v,C_n(A))\times\prod_{v\in\Sigma,v\mid p}H^1(F_v,C_n(A/F^+_v(A))) 
}
\]}} 
where $\Sel_\mathrm{str}(F,C_n(A))$ is defined by the 
exactness of the lower horizontal arrow. We claim that the vertical arrow $t_n$ is injective. 
To show this, note that the map $t_n$ is the product local maps $t_{n,v}$ for all $v\in \Sigma_n$, 
so we study first these maps $t_{n,v}$. 
If $w\nmid p$, then $I_w=I_v$ because $F_n/F$ is unramified outside $p$;
the map 
$t_{n,v}$ defined by 
\[t_{n,v}:\prod_{w\mid v}H^1(I_w,A)=H^1(I_v,A)^{\sharp{w\mid v}}\longrightarrow 
H^1(I_v,C_n(A))\simeq H^1(I_v,\Hom_R(R,A))^{\sharp\{w\mid v\}}.\]
It follows that $t_{n,v}$ is injective. The map $t_{n,v}$ for $v\mid p$ 
is defined by 
\[t_{n,v}:H^1(F_{n,w},A/F^+_v(A))\longrightarrow 
H^1(F_v,\Hom_R(R[G_n],A/F_v^+(A)))\]
which are all isomorphisms by Shapiro's Lemma because, being 
$p$ totally ramified in the extension $F_n/F$, we have $\Gal(F_n/F)\simeq G_n$. We therefore conclude that 
$t_n$ is injective.  
Since $s_n$ is an isomorphism, the map $r_n$ is an isomorphism too, showing the result. 
\end{proof}

\begin{lemma}\label{lemma support} Let $M$ be an $R$-module equipped with a $G_\infty$-action, 
denote $M_{R\text{-{tors}}}$ the $R$-torsion submodule of $M$ and let 
\[N=M_{R\text{-tors}}\otimes_R\Lambda_R(\kappa).\] 
Then the quotient $N/(\gamma^{p^n}-1)N$ is a pseudo-null $\Lambda_R$-module for each integer $n\geq 1$. 
\end{lemma}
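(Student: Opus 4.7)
The plan is to first identify $N/(\gamma^{p^n}-1)N$ with an explicit tensor product and then verify pseudo-nullity by localizing at every height-one prime of $\Lambda_R$.

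Since $\Lambda_R(\kappa)$ coincides with $\Lambda_R$ as a $\Lambda_R$-module and $\Lambda_R/(\gamma^{p^n}-1)\Lambda_R\simeq R[G_n]$, right-exactness of the tensor product yields an isomorphism of $\Lambda_R$-modules
\[N/(\gamma^{p^n}-1)N \;\simeq\; M_{R\text{-tors}}\otimes_R R[G_n],\]
where $\Lambda_R$ acts through its quotient $R[G_n]$. In particular $\gamma^{p^n}-1$ annihilates this module, so its support is contained in $V(\gamma^{p^n}-1)$, and it suffices to show that $\bigl(M_{R\text{-tors}}\otimes_R R[G_n]\bigr)_\p=0$ for every height-one prime $\p$ of $\Lambda_R$ containing $\gamma^{p^n}-1$.

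The main step is the following contraction claim: every such height-one prime $\p=(q)$ satisfies $\p\cap R=(0)$. Using the isomorphism $\Lambda_R\simeq R\pwseries{X}$ with $\gamma\leftrightarrow 1+X$ from Lemma \ref{lemma3.1}, the element $\gamma^{p^n}-1$ corresponds to $(1+X)^{p^n}-1$, a distinguished polynomial of degree $p^n$: its intermediate coefficients $\binom{p^n}{k}$ for $1\le k\le p^n-1$ are all divisible by $p$ and hence lie in $\mathfrak{m}_R$. Since $\Lambda_R$ is a UFD and height-one primes are principal, $\p=(q)$ for some irreducible factor $q$ of $(1+X)^{p^n}-1$; by Weierstrass preparation one may take $q$ itself to be a distinguished polynomial of positive degree $d$. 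The division algorithm then realizes $\Lambda_R/(q)$ as a free $R$-module with basis $1,X,\ldots,X^{d-1}$, so the composition $R\hookrightarrow\Lambda_R\twoheadrightarrow\Lambda_R/(q)$ is injective, giving $(q)\cap R=(0)$.

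With this in hand the proof concludes quickly. Fix $\p=(q)$ as above; since $\p\cap R=(0)$, every nonzero element of $R$ is a unit in $\Lambda_{R,\p}$. For each generator $x\otimes g$ of $M_{R\text{-tors}}\otimes_R R[G_n]$ pick $r\in R\setminus\{0\}$ with $rx=0$; then $r\cdot(x\otimes g)=0$, and the invertibility of $r$ in $\Lambda_{R,\p}$ forces $x\otimes g$ to vanish in the localization. Hence $(M_{R\text{-tors}}\otimes_R R[G_n])_\p=0$ and pseudo-nullity follows. The step I expect to spend most of the writing on is the Weierstrass-preparation argument that produces $(q)\cap R=(0)$; everything else is essentially formal, relying only on the $R$-torsion of $M_{R\text{-tors}}$ and the principal-ideal structure of height-one primes in $\Lambda_R$.
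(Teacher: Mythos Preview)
Your proof is correct and follows essentially the same route as the paper: reduce to showing that the localization of $N/(\gamma^{p^n}-1)N$ at every height-one prime $\mathfrak{a}$ of $\Lambda_R$ containing $\gamma^{p^n}-1$ vanishes, establish $\mathfrak{a}\cap R=(0)$ for such primes, and then use the $R$-torsion of $M_{R\text{-tors}}$ to kill the localization. The only difference is that the paper simply asserts the contraction $\mathfrak{a}\cap R=(0)$ without argument, whereas you supply a clean justification via Weierstrass preparation and the freeness of $\Lambda_R/(q)$ over $R$; this extra detail is welcome but does not change the structure of the proof.
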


\begin{proof} 
Set $I=(\gamma^{p^n}-1)$ for convenience.
The support of $N/I N$ consists of the prime ideals of $\Lambda_R$ containing $I$. 
Fix such a height one prime ideal $\mathfrak{a}=(a)$.
Then $a$ is an irreducible factor of $\gamma^{p^n}-1$.
Therefore, $\mathfrak{a}\cap R=0$. 
Thus, we have $\mathfrak{m}_R \setminus \lbrace 0 \rbrace \subseteq (\Lambda_R)^\times_\mathfrak{a} $.
It implies that the localization $(M_{R\text{-tors}})_\mathfrak{a}$ of $M_{R\text{-tors}}$ at $\mathfrak{a}$ is trivial. 
It follows that no height one prime ideal of $\Lambda_R$ lies in the support of $N/IN$, and therefore $N/IN$ is pseudo-null over $\Lambda_R$.   
\end{proof}

\begin{proposition} \label{prop 2.10} 
The Pontryagin duals of the kernel and cokernel of the restriction 
\[\res_{F_\infty/F_n}:\Sel_\mathrm{str}(F_n,A)\longrightarrow \Sel_\mathrm{str}(F_\infty,A)^{\Gal(F_\infty/F_n)}\]
are pseudo-null $\Lambda_R$-modules. 
\end{proposition}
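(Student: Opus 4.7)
The plan is to translate the restriction map, via Shapiro's lemma, into the framework of the control theorem (Theorem \ref{CT}) applied to the principal ideals $\q_n=(\omega_n)=(\gamma^{p^n}-1)$. By Lemma \ref{Shapiro} we have $\Sel_\mathrm{str}(F_n,A)\simeq\Sel_\mathrm{str}(F,\A[\q_n])$ and $\Sel_\mathrm{str}(F_\infty,A)\simeq\Sel_\mathrm{str}(F,\A)$. Since $\Gal(F_\infty/F_n)$ is topologically generated by $\gamma^{p^n}$ and acts on $\A$ through its $\Lambda_R$-module structure, the $\Gal(F_\infty/F_n)$-invariants of $\Sel_\mathrm{str}(F,\A)$ are exactly its $\q_n$-torsion submodule $\Sel_\mathrm{str}(F,\A)[\q_n]$. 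The naturality of Shapiro's isomorphism then identifies $\res_{F_\infty/F_n}$ with the canonical map $r_{\q_n}^\mathrm{str}\colon\Sel_\mathrm{str}(F,\A[\q_n])\to\Sel_\mathrm{str}(F,\A)[\q_n]$ appearing in Theorem \ref{CT}, so the proposition reduces to proving that both $\ker(r_{\q_n}^\mathrm{str})^\vee$ and $\mathrm{coker}(r_{\q_n}^\mathrm{str})^\vee$ are pseudo-null $\Lambda_R$-modules.

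Next I would invoke Theorem \ref{CT} with $\q=\q_n$. This requires verifying that $H^0(F_\Sigma/F,\A[\q_n])^\vee$ is pseudo-null and that $C_v^\mathrm{str}/\q_n C_v^\mathrm{str}$ is pseudo-null for every $v\in\Sigma$. For the first hypothesis, Shapiro again yields $H^0(F_\Sigma/F,\A[\q_n])\simeq H^0(F_\Sigma/F_n,A)$, whose Pontryagin dual is pseudo-null by Assumption \ref{ass}(2). For the second hypothesis at $v\notin\Sigma_p$, the extension $F_\infty/F$ is unramified at $v$, so $I_v$ acts trivially on $\Lambda_R(\kappa)$ and $(\T^*)_{I_v}\simeq (T^*)_{I_v}\otimes_R\Lambda_R(\kappa)$; by flatness of $\Lambda_R$ over $R$, the $\Lambda_R$-torsion submodule is $(T^*)_{I_v,R\text{-tors}}\otimes_R\Lambda_R(\kappa)$, and Lemma \ref{lemma support} directly yields pseudo-nullness of the quotient by $\q_n$.

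For $v\in\Sigma_p$ one unwinds $((\T/F_v^+(\T))^*)_{G_v}$ by taking $I_v$-coinvariants first and then coinvariants under a chosen lift of $\Frob_v$. Here Assumption \ref{ass}(4) enters decisively: $\delta_v$ is unramified with $\delta_v(\Frob_v)=u_v\not\equiv 1\pmod{\mathfrak{m}_R}$, so $u_v^{-1}-1\in R^\times\subseteq\Lambda_R^\times$. This makes the unramified Euler factor at $\Frob_v$ act invertibly on the inertia-coinvariants, forcing the $\Lambda_R$-torsion submodule $C_v^\mathrm{str}$ to be of the shape $M_{R\text{-tors}}\otimes_R\Lambda_R(\kappa)$ for a suitable $R$-module $M$ built from $F_v^-(T)$. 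A second application of Lemma \ref{lemma support} then gives the required pseudo-nullness.

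The main obstacle is precisely the computation at $v\in\Sigma_p$: one has to track carefully how the characters $\delta_v$, $\theta_v$ and the tautological twist $\kappa|_{G_v}$ interact on $(F_v^-(T))^*\otimes_R\Lambda_R(\kappa)$ and to confirm that, once the non-anomalous condition has been used to invert the unramified direction, the remaining torsion admits a presentation of the form required by Lemma \ref{lemma support}. With both pseudo-nullness statements in hand, Theorem \ref{CT} delivers the pseudo-nullness of $\ker(r_{\q_n}^\mathrm{str})^\vee$ and $\mathrm{coker}(r_{\q_n}^\mathrm{str})^\vee$, whence the proposition.
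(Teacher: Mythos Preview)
Your proposal is correct and follows the same strategy as the paper's proof: reduce to Theorem \ref{CT} via Lemma \ref{Shapiro}, verify the global hypothesis through Assumption \ref{ass}(2), and handle the local hypotheses at $v\nmid p$ with Lemma \ref{lemma support}. At $v\mid p$ the paper is slightly more direct than you anticipate: after untwisting by $\theta_v$, the non-anomalous condition $u_v-1\in R^\times$ forces $((\T/F_v^+(\T))^*)_{G_v}=0$ outright, so $C_v^{\mathrm{str}}=0$ and no second appeal to Lemma \ref{lemma support} is needed.
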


\begin{proof}
We only need to check that 
the assumptions in Theorem \ref{CT} are satisfied.
If so,  the result follows 
by taking $\q_n=(\gamma^{p^n}-1)$ in Theorem \ref{CT}, 
and by using Lemma \ref{Shapiro} to identify 
$\Sel_\mathrm{str}(F,\A[\q_n])$ and $\Sel_\mathrm{str}(F,\A)$ with 
$\Sel_\mathrm{str}(F_n,A)$ and $\Sel_\mathrm{str}(F_\infty,A)$, 
respectively. 

By Shapiro's Lemma, we have 
$H^0(F_\Sigma/F,\A[\q_n])\simeq H^0(F_\Sigma/F_n,A)$, 
and therefore the first assumption in Theorem \ref{CT} is equivalent to (2) in Assumption \ref{ass}.  

We first consider 
$C_v^\mathrm{str}/(\gamma^{p^n}-1)C_v^\mathrm{str}$
for $v\nmid p$.
The action of $I_v$ on $\Lambda_R(\kappa^{-1})$ trivial since all the primes outside $p$ are unramified in $F_\infty$. 
Therefore, $(\T^*)_{I_v}=(T^*)_{I_v}\otimes_R\Lambda_R(\kappa)$, 
and \[C_v^\mathrm{str}=((T^*)_{I_v})_{R\text{-tors}}\otimes\Lambda_R(\kappa)\]
where $((T^*)_{I_v})_{R\text{-tors}}$ is the $R$-torsion submodule of $(T^*)_{I_v}$. 
Thus, for $v\nmid p$, the statement in the assumption of Theorem \ref{CT} 
is equivalent to that \[((T^*)_{I_v})_{R\text{-tors}}\otimes\Lambda_R(\kappa)/(\gamma^{p^n}-1)((T^*)_{I_v})_{R\text{-tors}}\otimes\Lambda_R(\kappa)\] is pseudo-null, which follows from Lemma \ref{lemma support} applied to $M=(T^*)_{I_v}$. 


We now consider 
$C_v^\mathrm{str}/(\gamma^{p^n}-1)C_v^\mathrm{str}$ 
for $v\mid p$. Since $\Lambda_R$ is flat over $R$, we have 
\[(\T/F_v^+(\T))^*=(T/F^+_v(T))^*\otimes_R\Lambda_R(\kappa)=
F^-_v(T)^*\otimes_R\Lambda_R(\kappa).\]
We have
\[
\begin{split}(T/F^+_v(T))^*\otimes_R\Lambda_R(\kappa)&\simeq
\left((T/F^+_v(T))^*\otimes_RR(\theta_v)\right)
\otimes_R\left(\Lambda_R(\kappa)\otimes_RR(\theta_v^{-1})\right)\\
&\simeq\left((T/F^+_v(T))\otimes_RR(\theta_v^{-1})\right)^*
\otimes_R\left(\Lambda_R(\kappa\cdot\theta_v^{-1})\right)\\
&\simeq \left(F^-_v(T)\otimes_RR(\theta_v^{-1})\right)^*
\otimes_R\left(\Lambda_R(\kappa\cdot\theta_v^{-1})\right).\end{split}
\]
The action of $I_v$ on
$F^-_v(T)\otimes R(\theta_v^{-1})$ is trivial by (4) in Assumption \ref{ass},  
and therefore the $I_v$-coinvariant of 
$(\T/F_v^+(\T))^*$ is
\[
(F^-_v(T)\otimes_RR(\theta_v^{-1}))^*\otimes_R\left(
\Lambda_R(\kappa\cdot\theta_v^{-1})\right)_{I_v}.
\] 
Since $\delta_v$ is unramified by (4) in Assumption \ref{ass}, the coinvariant of the action of 
$G_v/I_v$ on 
$\left(F^-_v(T)\otimes_RR(\theta_v^{-1})\right)^*$ is given by 
\[\begin{split}
\frac{\left(F^-_v(T)\otimes_RR(\theta_v^{-1})\right)^*}{(\Frob_v-1)
\left(F^-_v(T)\otimes_RR(\theta_v^{-1})\right)^*}&
\simeq
\frac{\left(F^-_v(T)\otimes_RR(\theta_v^{-1})\right)^*}{(u_v-1)
\left(F^-_v(T)\otimes_RR(\theta_v^{-1})\right)^*}\\
&\simeq
\frac{\left(F^-_v(T)\otimes_RR(\theta_v^{-1})\right)^*}{
\left((u_v-1)F^-_v(T)\otimes_RR(\theta_v^{-1})\right)^*}
\end{split}
\]
By (4) in Assumption \ref{ass}, $u_v$ is not congruent to 1 modulo the maximal ideal of $R$, 
so $u_v-1\in R^\times$, and therefore $(u_v-1)F_v^-(T)=0$. 
Moreover, $\delta_v$ acts trivially on
$\left(\Lambda_R(\kappa\cdot\theta_v^{-1})\right)_{I_v}$.  
Therefore, the $G_v$-coinvariant of $(\T/F_v^+(\T))^*$ is trivial, and it follows in particular that the assumption on $C_v^\mathrm{str}/(\gamma^{p^n}-1)C_v^\mathrm{str}$ 
for $v\mid p$ in Theorem \ref{CT} is satisfied. 
\end{proof}

\begin{lemma}\label{lemma 2.9}
Suppose that $M$ is a pseudo-null $\Lambda_R$-module. 
Then for each integer 
$n\geq 0$, $M/(\gamma^{p^n}-1)M$  
is torsion over $R[G_n] \simeq \Lambda_R/(\gamma^{p^n}-1)\Lambda_R$. 
\end{lemma}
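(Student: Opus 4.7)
The plan is to pass from pseudo-nullity over $\Lambda_R$ (a condition on height-one primes of $\Lambda_R$) to torsion over $A_n := \Lambda_R/(\gamma^{p^n}-1)\Lambda_R \simeq R[G_n]$, by analyzing localizations at the minimal primes of $A_n$. Write $\omega_n := \gamma^{p^n}-1$ for brevity.

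The key structural input I would use is that $A_n$ is Cohen--Macaulay: indeed, $\Lambda_R$ is a regular local ring (hence Cohen--Macaulay) and a domain by Lemma \ref{lemma3.1} and the remarks following it, so $\omega_n \neq 0$ is a regular element, and the quotient inherits Cohen--Macaulayness. In particular, $\mathrm{Ass}(A_n) = \mathrm{Min}(A_n)$, and by Krull's Hauptidealsatz each element of this set is the image of a height-one prime $\mathfrak{q}$ of $\Lambda_R$ containing $\omega_n$.

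I would then invoke the standard criterion that a finitely generated module $N$ over a Noetherian ring $B$ is torsion if and only if $N_{\mathfrak{p}} = 0$ for every $\mathfrak{p} \in \mathrm{Ass}(B)$ (the direction we need: if $N_{\mathfrak{p}} = 0$ for every associated prime, then $\mathrm{Ann}(N)$ is contained in none of them, hence by prime avoidance contains a non-zero-divisor). Applying this with $N = M/\omega_n M$ over $B = A_n$, it suffices to check that $(M/\omega_n M)_{\bar{\mathfrak{q}}} \simeq M_{\mathfrak{q}}/\omega_n M_{\mathfrak{q}}$ vanishes at every minimal prime $\bar{\mathfrak{q}}$ of $A_n$; but this is immediate from pseudo-nullity, which forces $M_{\mathfrak{q}} = 0$ at the corresponding height-one prime $\mathfrak{q}$ of $\Lambda_R$. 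The only nontrivial ingredient is thus the Cohen--Macaulay reduction $\mathrm{Ass}(A_n) = \mathrm{Min}(A_n)$; once that is in place, the rest is a direct unwinding of definitions.
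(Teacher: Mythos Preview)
Your argument is correct. The paper's proof and yours share the same core: one must show that $M/\omega_n M$ vanishes after localization at the height-one primes $\mathfrak{q}$ of $\Lambda_R$ containing $\omega_n$, which is immediate from $M_\mathfrak{q}=0$ by pseudo-nullity. The paper reaches this by contradiction---assuming $M/\omega_n M$ is non-torsion, embedding a copy of $R[G_n]$ inside it, and localizing at such a $\mathfrak{q}$---whereas you invoke the Cohen--Macaulay property of $A_n=\Lambda_R/\omega_n\Lambda_R$ to identify $\mathrm{Ass}(A_n)$ with $\mathrm{Min}(A_n)$ and then apply the standard localization criterion for torsion directly. Your packaging is slightly cleaner: it sidesteps the step of producing a free rank-one $R[G_n]$-submodule from a non-torsion element (which, since $R[G_n]$ is not a domain, is not entirely automatic), at the modest cost of citing the Cohen--Macaulay input.
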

\begin{proof}
Suppose $M/(\gamma^{p^n}-1)M$ is not a torsion $R[\Gamma_n]$-module, and take a copy $N$ of $R[\Gamma_n]$ in $M/(\gamma^{p^n}-1)M$. 
Take any height one prime ideal $\mathfrak{a} = (a)$ of $\Lambda_R$ such that $(a, \gamma^{p^n}-1)=1$. 
Then $N_\mathfrak{a}\neq 0$. In particular, $M_\mathfrak{a}/(\gamma^{p^n}-1)M_\mathfrak{a}\neq 0$ so $M_\mathfrak{a}\neq 0$, which contradicts the assumption that $M$ is a pseudo-null $\Lambda_R$-module.
\end{proof}

\begin{corollary}\label{coro 2.10} 
The Pontryagin duals of the kernel and cokernel of the restriction 
\[\res_{F_\infty/F_n}:\Sel_\mathrm{str}(F_n,A)\longrightarrow \Sel_\mathrm{str}(F_\infty,A)^{\Gal(F_\infty/F_n)}\]
are cotorsion $R[G_n]$-modules. 
\end{corollary}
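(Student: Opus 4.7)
The plan is to upgrade the conclusion of Proposition \ref{prop 2.10}, which lives over the large ring $\Lambda_R$, to a torsion statement over the quotient ring $R[G_n] \simeq \Lambda_R/(\gamma^{p^n}-1)\Lambda_R$. The bridge between the two is Lemma \ref{lemma 2.9}, and the key elementary observation is that the Pontryagin duals in question are already annihilated by $\omega_n = \gamma^{p^n}-1$.

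More precisely, I would first apply Proposition \ref{prop 2.10} to conclude that both $\ker(\res_{F_\infty/F_n})^\vee$ and $\mathrm{coker}(\res_{F_\infty/F_n})^\vee$ are pseudo-null as $\Lambda_R$-modules. Write $M$ for either of these duals. Next I would verify that $\omega_n M = 0$: the source $\Sel_\mathrm{str}(F_n,A)$ carries a natural action of $\Gal(F_n/F) = G_n$ and is hence killed by $\omega_n$ when viewed through $\Lambda_R \twoheadrightarrow R[G_n]$; the target $\Sel_\mathrm{str}(F_\infty,A)^{\Gal(F_\infty/F_n)}$ is by its very definition an $R[G_n]$-module, and $\res_{F_\infty/F_n}$ is $G_n$-equivariant, so both $\ker(\res_{F_\infty/F_n})$ and $\mathrm{coker}(\res_{F_\infty/F_n})$ are $R[G_n]$-modules. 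Pontryagin duality preserves this annihilation, so indeed $\omega_n M = 0$, and in particular $M/\omega_n M = M$.

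The final step is to invoke Lemma \ref{lemma 2.9} for this pseudo-null $\Lambda_R$-module $M$ with the given $n$: the lemma asserts that $M/\omega_n M$ is torsion over $R[G_n]$. Combined with the equality $M/\omega_n M = M$ from the previous paragraph, this immediately yields that $M$ itself is torsion over $R[G_n]$, which is the content of the corollary.

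There is no serious obstacle; all the substantive work has been absorbed into Proposition \ref{prop 2.10} and Lemma \ref{lemma 2.9}. The only mild point to verify is that $M$ really is annihilated by $\omega_n$, which is a formal consequence of $G_n$-equivariance of the restriction map together with the appearance of $\Gal(F_\infty/F_n)$-invariants on its target.
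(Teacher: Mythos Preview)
Your proposal is correct and is exactly the argument the paper has in mind: its proof is the single sentence ``It follows from Proposition~\ref{prop 2.10} and Lemma~\ref{lemma 2.9},'' and you have simply made explicit the one point left to the reader, namely that the duals in question are already annihilated by $\omega_n=\gamma^{p^n}-1$ so that Lemma~\ref{lemma 2.9} applies directly to them.
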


\begin{proof}
It follows from Proposition \ref{prop 2.10} and Lemma \ref{lemma 2.9}. 
\end{proof}

\section{Anticyclotomic Iwasawa theory for Hida families}
\subsection{Ordinary families of modular forms}\label{Hida}
Let $f_0=\sum_{n=1}^\infty a_nq^n\in S_{k_0}(\Gamma_0(Np))$ an ordinary
$p$-stabilized newform (in the sense of \cite[Def.~2.5]{GS}) of weight $k_0\geq 2$ and trivial nebentypus,
defined over a finite extension $L/\Q_p$. 
Let $\mathcal{O}=\mathcal O_L$ be the valuation ring of $L$ and 
$a_p\in\mathcal{O}^\times$, and
$f_0$ is either a newform of level $Np$, or arises from a newform of level $N$.
Denote\[\rho_{f_0}:G_\Q:={\rm Gal}(\overline{\Q}/\Q)\longrightarrow{\rm GL}_2(\mathcal{O})\] 
the Galois representation
associated with $f_0$. 
Since $f_0$ is ordinary at $p$, the restriction of $\rho_{f_0}$ to a decomposition group
$D_p\subset G_\Q$ is upper-triangular. We also denote $k=k_L$ the residue field of $L$ and 
\[\bar\rho_{f_0}:G_\Q\longrightarrow{\rm GL}_2(k)\] 
the residual representation obtained by reduction modulo the maximal ideal $\mathfrak{m}=\mathfrak{m}_L$ of $\mathcal{O}$. 

\begin{assumption}\label{ass1} The representation
$\bar{\rho}_{f_0}$ is absolutely irreducible, and \emph{$p$-distinguished}, i.e.,
writing $\bar{\rho}_{f_0}\vert_{D_p}\sim\left(\begin{smallmatrix}\bar{\varepsilon}&*\\0&\bar{\delta}\end{smallmatrix}\right)$,
we have $\bar{\varepsilon}\neq\bar{\delta}$.
\end{assumption}

Let $\mathfrak h^{\ord}$ be the Hida ordinary Hecke algebra of tame level $\Gamma_0(N)$, and let $\I$ be the branch of $\mathfrak h^\ord$ passing through ${f_0}$. If $\Lambda:=\mathcal{O}\pwseries{\Gamma}$, where $\Gamma=1+p\Z_p$, then 
$\I$ is a finite flat extension of $\Lambda$ (the structure of $\Lambda$-algebra in $\mathfrak h^\ord$ is given by the action of diamond operators in $\Gamma$). 
The eigenform $f_0$ defines an $\mathcal{O}_L$-algebra homomorphism $\lambda_{f_0}:\I\rightarrow\mathcal{O}$, which is called arithmetic. More generally, an \emph{arithmetic point} of $\I$ is a continuous $\mathcal{O}_L$-algebra homomorphism $\I\overset\kappa\rightarrow\overline{\Q}_p$
such that the composition
\[
\Gamma\longrightarrow\Lambda^\times\longrightarrow\I\xrightarrow{\;\k\;}\overline{\Q}_p^\times
\]
is given by $\gamma\mapsto\psi(\gamma)\gamma^{k-2}$,
for some integer $k\geq 2$ and some finite order character $\psi:\Gamma\rightarrow\overline{\Q}_p^\times$.
We then say that $\k$ has \emph{weight} $k$, \emph{character} $\psi$, and \emph{wild level} $p^m$,
where $m>0$ is such that ${\rm ker}(\psi)=1+p^m\Z_p$.
Denote by $\mathcal X(\I)$ the set of continuous $\mathcal{O}$-algebra homomorphisms 
from $\I$ into $\mathcal O$, and by 
$\mathcal{X}_{\rm arith}(\I)$ the subset of $\mathcal X(\I)$ consisting of 
arithmetic primes. 
For each $\k\in\mathcal{X}_{\rm arith}(\I)$, let $F_\k$ be the residue field of ${\rm ker}(\k)\subset\I$, which is a finite extension of $\Q_p$.

For each $n\geq 1$, let $\mathbf{a}_n\in\I$ be the image of $T_n\in\mathfrak{h}^{\rm ord}$ under the natural
projection $\mathfrak{h}^{\rm ord}\rightarrow\I$, and form the $q$-expansion
\[\F=\sum_{n=1}^\infty\mathbf{a}_nq^n\in\I\pwseries{q}.\] By \cite[Thm.~1.2]{hida86b}, if $\k\in\mathcal{X}_{\rm arith}(\I)$
is an arithmetic prime of weight $k\geq 2$, character $\psi$, and wild level $p^m$, then
\[
f_\k:=\sum_{n=1}^\infty\k(\mathbf{a}_n)q^n\in F_\k[[q]]\]
is (the $q$-expansion of) an ordinary $p$-stabilized newform 
in $S_k(\Gamma_0(Np^{m}),\omega^{k_0-k}\psi)$ 
of level $\Gamma_0(Np^n)$, character $\omega^{k_0-k}\psi$ and weight $k$, 
where $\omega:(\Z/p\Z)^\times\rightarrow\Z_p^\times$ is the Teichm\"uller character.

\subsection{Critical characters} Following \cite[Def.~2.1.3]{howard-invmath}, factor the $p$-adic cyclotomic character as
\[
\varepsilon_{\rm cyc}=\varepsilon_{\rm tame}\cdot\varepsilon_{\rm wild}:G_\Q\longrightarrow \Z_p^\times\simeq\boldsymbol{\mu}_{p-1}\times\Gamma,
\]
and define the \emph{critical character} $\Theta:G_\Q\rightarrow\I^\times$ by
\begin{equation}\label{def:crit}
\Theta(\sigma)=\varepsilon_{\rm tame}^{\frac{k_0-2}{2}}(\sigma)\cdot[\varepsilon^{1/2}_{\rm wild}(\sigma)],
\end{equation}
where $\varepsilon_{\rm tame}^{\frac{k_0-2}{2}}:G_\Q\rightarrow\boldsymbol{\mu}_{p-1}$ is any fixed choice
of square-root of $\varepsilon_{\rm tame}^{k_0-2}$ (see \cite[Rem.~2.1.4]{howard-invmath}),
$\varepsilon_{\rm wild}^{1/2}:G_\Q\rightarrow\Gamma$ is
the unique square-root of $\varepsilon_{\rm wild}$ taking values in $\Gamma$,
and $[\cdot]:\Gamma\rightarrow\Lambda^\times\rightarrow\I^\times$ is the map given by
the inclusion as group-like elements.

Define the character $\theta:\Z_p^\times\rightarrow\I^\times$ by the relation
$
\Theta=\theta\circ\varepsilon_{\rm cyc},
$
and for each $\k\in\mathcal{X}_{\rm arith}(\I)$, let $\theta_\k:\Z_p^\times\rightarrow\overline{\Q}_p^\times$
be the composition of $\theta$ with $\k$. If $\k$ has weight $k\geq 2$ and character $\psi$,
then 
\begin{equation}\label{eq:theta}
\theta_\k^2(z)=z^{k-2}\omega^{k_0-k}\psi(z)
\end{equation}
for all $z\in\Z_p^\times$.

\subsection{$p$-adic $L$-functions} \label{p-adic-L-functions}
Let $K/\Q$ be an imaginary quadratic field of discriminant prime to $Np$. 
Write $N=N^+N^-$, where all primes dividing $N^+$ are split in $K$, and 
all primes dividing $N^-$ are inert in $K$. We will work under the following 

\begin{assumption}\label{ass2}\begin{enumerate}
\item 
$N^-$ is a square-free product of an odd number of primes. 
\item The residual representation $\bar{\rho}_{f_0}$ is ramified at all primes $\ell\mid N^-$.
\item 
$a_p\not\equiv \pm1$ modulo the maximal ideal of $\mathcal{O}$
(we say that  $p$ is a \emph{non-anomalous prime}
for $\bar{\rho}_{f_0}$ in this case). 
\item $p$ is split in $K$. 
\end{enumerate}
\end{assumption} 

Let $B$ be the definite quaternion algebra over $\Q$ of discriminant $N^-$. 
For each prime $\ell\nmid N^-$, fix isomorphisms $\iota_\ell:B\otimes_\Q\Q_\ell\simeq\M_2(\Q_\ell)$. 
Let $m\mapsto R_m$, for $m\geq 0$ an integer, be the sequence of Eichler orders of level $N^+p^m$, 
defined by the condition that
$\iota_\ell(R_m\otimes_\Z\Z_\ell)$ consists of the matrices in $\M_2(\Z_\ell)$ which 
are upper triangular modulo $\ell^{\val_\ell(N^+p^m)}$ for all primes $\ell\nmid N^-$
(thus, in particular,  $R_{m+1}\subseteq R_m$ for all integers $m\geq0$). For a ring $A$, 
denote $\hat{A}$ its profinite completion. 
Let $U_m\subset\widehat{R}_m^\times$ be the compact open subgroup
defined by
\[
U_m:=\left\{(x_q)_q\in\widehat{R}_m^\times\;\;\vert\;\;i_p(x_p)\equiv\mat 1*0*\pmod{p^m}\right\}.
\]
Consider the double coset spaces
\[
\widetilde X_m(K)=B^\times\big\backslash\bigl(\Hom_\Q(K,B)\times\widehat{B}^\times\bigr)\big/U_m,
\]
where $b\in B^\times$ act on left on $(\Psi,g)\in\Hom_\Q(K,B)\times\widehat B^\times$ by
$
b\cdot(\Psi,g)=(bgb^{-1},bg),
$
and $U_m$ acts on $\widehat{B}^\times$ by right multiplication and on $\Hom_\Q(K,B)$ trivially. The space $\widetilde{X}_m(K)$ is equipped with  a nontrivial Galois action defined as follows: If $\sigma\in{\rm Gal}(K^{\rm ab}/K)$ and $P\in\widetilde{X}_m(K)$
is the class of a pair $(\Psi,g)$, then
$
P^\sigma:=[(\Psi,g\widehat{\Psi}(a))],
$
where $a\in K^\times\backslash\widehat{K}^\times$ is such that ${\rm rec}_K(a)=\sigma$,
and we extend this to an action of $G_K$ by letting each
$\sigma\in G_K$ act on $\widetilde{X}_m(K)$ as $\sigma\vert_{K^{\rm ab}}$. 
The space $\widetilde{X}_m(K)$ is also equipped with standard action of Hecke operators $T_\ell$ for $\ell\nmid Np$, 
$U_p$ and diamond operators $\langle d \rangle$ for $d\in\Z_p^\times$. 

Let $D_m:=\Div(\tilde X_m)\otimes\mathcal O_L$ be the divisor group of $\tilde X_m$ and 
denote $\alpha_m:D_m\twoheadrightarrow D_{m-1}$ the canonical projection.
Passing to the ordinary part $D_m^\ord$ 
and tensoring with the primitive component $\I$ gives Hecke modules 
$\mathbf D_m$ (for $m\geq 0$) and, twisting the Galois action by $\Theta^{-1}$, Hecke modules
$\mathbf D_m^\dagger$. The analogous Hecke modules obtained from the inverse limits of the divisor group $D_m$ (with respect to the canonical projection maps $\alpha_m$) are the Hecke modules denoted $\mathbf D$ and $\mathbf D^\dagger$
in \cite[\S  6.4]{LV-MM}. Let $e^\ord$ denote the ordinary projector. 
Denote ${\rm Pic}(\widetilde{X}_m)$ 
the Picard group of $\widetilde{X}_m$. Define the Hecke modules 
$
J_m^{\rm ord}:=e^{\rm ord}({\rm Pic}(\widetilde{X}_m)\otimes_{\Z}\mathcal{O}_L)$, 
$\mathbf{J}_m:=J_m^{\rm ord}\otimes_{\mathfrak{h}^{\rm ord}}\I$ 
and $\mathbf{J}_m^\dagger:=\mathbf{J}_m\otimes_{\I}\I^\dagger$. 
Finally define $\mathbf{J}^\dagger:=\varprojlim_m\mathbf{J}_m^\dagger$. 
The projections ${\rm Div}(\widetilde{X}_m)\rightarrow{\rm Pic}(\widetilde{X}_m)$ induce a map
\[\lambda:\mathbf{D}^\dagger\longrightarrow\mathbf{J}^\dagger.\]

Thanks to Assumptions \ref{ass1} and \ref{ass2}, we have 
${\rm dim}_{k_\I}(\mathbf{J}^\dagger/\mathfrak{m}_\I\mathbf{J}^\dagger)=1$ by \cite[Theorem 3.1]{CKL}; 
here, $\mathfrak{m}_\I$ is the maximal ideal of $\I$, and $k_\I:=\I/\mathfrak{m}_\I$ is its residue field.
By \cite[Prop.~9.3]{LV-MM}, we conclude that the module $\mathbf{J}^\dagger$ is free of
rank one over $\I$. Fix an isomorphism \[\eta:\mathbf J^\dagger\simeq \I.\] 

Let $K_\infty$ be the anticyclotomic $\Z_p$-extension of $K$, 
and define $\Gamma_\infty=\Gal(K_\infty/K)\simeq\Z_p$. Denote $K_n$ the subfield 
of $K_\infty$ such that $\Gamma_n=\Gal(K_n/K)\simeq\Z/p^n\Z$. Define 
\[\Lambda_{\mathcal{R}}=\I\pwseries{\Gamma_\infty}=\invlim_n\I[\Gamma_n].\]

The paper \cite{LV-MM} introduces for each integer $n\geq 0$ a sequence $m\mapsto \tilde{P}_{p^n,m}$ of Gross-Heegner points in $\tilde{X}_m(K)$ of conductor $p^{m+n}$; these points satisfy norm-relations and allows to construct 
big theta elements $\Theta_{n}(\mathbf{f})\in \mathbf{D}[\Gamma_n]$ 
by an inverse limit procedure inverting the $U_p$ operator; we will view 
$\Theta_{n}(\mathbf{f})$ as elements in $\I[\Gamma_n]$ by means of the map 
 $\mathbf{D}\overset\lambda\rightarrow\mathbf{J}\overset\eta\simeq\I$. 
The elements $\Theta_n(\mathbf{f})$ are compatible
under the natural maps $\I[\Gamma_m]\rightarrow\I[\Gamma_n]$ for all $m\geq n$, thus defining an element
$
\Theta_{\infty}(\mathbf{f}):=\varprojlim_n\Theta_{n}(\mathbf{f})
$
in the completed group ring $\Lambda_{\mathcal{R}}$.

\begin{definition}\label{p-adic L-function}
The \emph{two-variable $p$-adic $L$-function} attached $\mathbf{f}$ and $K$ is
the element
\[
L_p(\mathbf{f}/K):=\Theta_{\infty}(\mathbf{f})\cdot\Theta_{\infty}(\mathbf{f})^*
\in\Lambda_{\mathcal{R}},
\]
where $x\mapsto x^*$ is the involution on $\I\pwseries{\Gamma_\infty}$ given by
$\gamma\mapsto\gamma^{-1}$ on group-like elements.
\end{definition}

\subsection{Selmer groups of Hida families} 
Let $\mathbf{T}$ be Hida's big Galois representation associated with $\I$. 
Then $\mathbf{T}$ is a free $\I$-module of rank $2$, equipped with a continuous action of $G_\Q=\Gal(\bar\Q/\Q)$ and a filtration of $\mathcal{R}[G_{\Q_p}]$-modules 
\[0\longrightarrow F^+_v(\mathbf{T})\longrightarrow\mathbf{T}\longrightarrow F^-_v(\mathbf{T})\longrightarrow0\] 
where $G_{\Q_p}=\Gal(\bar{\Q}_p/\Q_p)$ is a decomposition group of $G_\Q$ at $p$. 
Both $F^+_v(\mathbf{T})$ and $F^-_v(\mathbf{T})$
are free $\I$-modules of rank $1$; $G_v$ acts on $F^-_v(\mathbf{T})$ 
via the unamified character $\eta_v:G_v/I_v\rightarrow \I^\times$ which 
takes the arithmetic Frobenius to $U_p$, and $G_v$ acts on $F_v^+(\mathbf{T})$ via $\eta_v^{-1}\varepsilon_\mathrm{cyc}[\varepsilon_\mathrm{cyc}]$. 

Denote $\mathbf{T}^\dagger=\mathbb{T}\otimes\Theta^{-1}$ the critical twist of $\mathbf{T}$ corresponding to 
the choice of the critical character $\Theta$ in \eqref{def:crit}.  
For each arithmetic point, define $F_\kappa=\mathcal{R}_\kappa/\ker(\kappa)\mathcal{R}_\kappa$, where $\mathcal{R}_\kappa$ is the localisation of $\mathcal{R}$ at $\kappa$. Then $V_{\kappa}^\dagger=\mathbf{T}^\dagger\otimes_\mathcal{R}F_\kappa$ is isomorphic to the self-dual twist of Deligne representation $V_{f_\kappa}$ attached to the eigenform $f_\kappa$. If $\p=\p_\kappa=\ker(\kappa)$, we also denote 
$\I_\kappa$ by $\I_\p$ and 
$V_\kappa^\dagger$ by $V_\p^\dagger$. 
Moreover, we have a filtration 
 $\mathcal{R}[G_{\Q_p}]$-modules 
\[0\longrightarrow F^+_v(\mathbf{T}^\dagger)\longrightarrow\mathbf{T}^\dagger\longrightarrow F^-_v(\mathbf{T}^\dagger)\longrightarrow0\] 
where $G_v$ acts on $F^-_v(\mathbf{T}^\dagger)$ 
via the character $\eta_v\Theta^{-1}$ and $G_v$ acts on $F_v^+(\mathbf{T}^\dagger)$ via $\eta_v^{-1}\Theta^{-1}\varepsilon_\mathrm{cyc}[\varepsilon_\mathrm{cyc}]$. 
Let \[\mathbf{A}^\dagger=\Phi(\mathbf{T}^\dagger)=\mathbf{T}^\dagger\otimes_\I\I^\vee.\] 

As in \S \ref{sec 2.2} we introduce strict Greenberg Selmer groups 
$\Sel_\mathrm{str}(K_n,\mathbf{A}^\dagger)$ and 
$\Sel_\mathrm{str}(K_\infty,\mathbf{A}^\dagger)$ and 
Selmer groups $\Sel(K_n,\mathbf{A}^\dagger)$ and 
$\Sel(K_\infty,\mathbf{A}^\dagger)$. Under our assumptions, 
by \cite[Theorem 4.1]{CKL}, 
we know that 
$\Sel_\mathrm{str}(K_n,\mathbf{A}^\dagger)\simeq\Sel(K_n,\mathbf{A}^\dagger)$ and $\Sel_\mathrm{str}(K_\infty,\mathbf{A}^\dagger)\simeq\Sel(K_\infty,\mathbf{A}^\dagger)$. 
We may also consider Nekov\'a\v{r}'s extended Selmer groups $\tilde H^1_f(K_n,\A^\dagger)$ 
and $\tilde H^1_f(K_\infty,\A^\dagger)$. By \cite[Lemma 9.6.3]{Nekovar} 
we have an exact sequence 
\[H^0(K_n,\A^\dagger)\longrightarrow \tilde H^1_f(K_n,\A^\dagger)\longrightarrow \Sel_\mathrm{str}(K_n,\A^\dagger)\longrightarrow 0.\]

\begin{lemma} \label{vanishing}
$H^0(K_n,\A^\dagger)=0$.
\end{lemma}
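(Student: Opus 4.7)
The plan is to reduce the question to a statement about the residual representation via a Nakayama-type argument, and then exploit the absolute irreducibility of $\bar\rho_{f_0}$ from Assumption \ref{ass1} together with the structure of $\Gal(K_n/\Q)$. First, since $\mathbf{A}^\dagger=\mathbf{T}^\dagger\otimes_{\mathcal R}\mathcal R^\vee$ and $\mathcal R^\vee$ is the Pontryagin dual of the complete local Noetherian ring $\mathcal R$ (equivalently, the injective hull of the residue field), every element of $\mathbf{A}^\dagger$ is killed by some power of the maximal ideal $\mathfrak m_{\mathcal R}$. Because $\mathfrak m_{\mathcal R}$ is Galois-stable, if $0\neq x\in H^0(K_n,\mathbf{A}^\dagger)$ with $m\geq 1$ minimal satisfying $\mathfrak m_{\mathcal R}^m x=0$, then $\mathfrak m_{\mathcal R}^{m-1}x$ is a nonzero $G_{K_n}$-fixed subspace of $\mathbf{A}^\dagger[\mathfrak m_{\mathcal R}]$. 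It therefore suffices to show that
\[ H^0\bigl(K_n,\mathbf{A}^\dagger[\mathfrak m_{\mathcal R}]\bigr)=0. \]

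Next, I would identify $\mathbf{A}^\dagger[\mathfrak m_{\mathcal R}]$ as a Galois module. The freeness of $\mathbf{T}^\dagger$ of rank $2$ over $\mathcal R$ gives an isomorphism of $G_\Q$-modules
\[ \mathbf{A}^\dagger[\mathfrak m_{\mathcal R}]\simeq (\mathbf{T}^\dagger/\mathfrak m_{\mathcal R}\mathbf{T}^\dagger)\otimes_k k^\vee\simeq\bar\rho^\dagger\otimes_k k^\vee, \]
where $\bar\rho^\dagger=\bar\rho_{f_0}\otimes\bar\Theta^{-1}$ is the reduction of the self-dual twist of Hida's big representation, and $k^\vee$ is a trivial Galois module. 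Twisting by a character preserves absolute irreducibility, so $\bar\rho^\dagger$ is absolutely irreducible by Assumption \ref{ass1}.

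The final step, which I expect to be the main content, exploits the solvable Galois structure of $K_n/\Q$. By the anticyclotomic property, $K_\infty/\Q$ is Galois with complex conjugation acting as $-1$ on $\Gamma_\infty$; hence $K_n/\Q$ is Galois with group $\Gamma_n\rtimes\Gal(K/\Q)\simeq\Z/p^n\Z\rtimes\Z/2\Z$, a solvable group of order $2p^n$, and in particular $G_{K_n}$ is normal in $G_\Q$. Setting $V=\mathbf{A}^\dagger[\mathfrak m_{\mathcal R}]$, the subspace $V^{G_{K_n}}$ is therefore $G_\Q$-stable, so equals $0$ or $V$ by absolute irreducibility. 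In the latter case $\bar\rho^\dagger$ would factor through $\Gal(K_n/\Q)$. But in characteristic $p\geq 5$ the normal Sylow $p$-subgroup $\Gamma_n$ of $\Gal(K_n/\Q)$ must act trivially on every simple $k$-representation of $\Gal(K_n/\Q)$ (since $V^{\Gamma_n}$ is nonzero and $G_\Q$-stable, forcing $V^{\Gamma_n}=V$), so $\bar\rho^\dagger$ would descend to a representation of $\Z/2\Z$, which has only $1$-dimensional absolutely irreducible representations over $k$. This contradicts $\dim_k\bar\rho^\dagger=2$ and concludes the proof.
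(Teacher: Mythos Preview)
Your proof is correct and follows essentially the same strategy as the paper's: reduce to the $\mathfrak{m}_{\mathcal R}$-torsion of $\mathbf{A}^\dagger$, identify this with (a twist of) the residual representation, and then use absolute irreducibility together with the Galois structure of $K_n/\Q$ to kill the invariants. The paper phrases the reduction via Pontryagin duality and topological Nakayama (showing $M/\mathfrak{m}_{\mathcal R}M=0$ for $M=H^0(K_n,\mathbf{A}^\dagger)^\vee$), whereas you argue directly that a nonzero invariant would produce a nonzero invariant in $\mathbf{A}^\dagger[\mathfrak{m}_{\mathcal R}]$; these are equivalent. The only substantive difference is in the last step: the paper simply cites \cite[Lemmas 3.9, 3.10]{LV-TAMS} for the vanishing of $H^0(K_n,\bar\rho_{f_0})$, while you give the argument in full, exploiting that $G_{K_n}\lhd G_\Q$ (so the invariants form a $G_\Q$-subrepresentation), that a $p$-group in characteristic $p$ has nonzero fixed vectors, and that $\Gal(K/\Q)\simeq\Z/2\Z$ has no $2$-dimensional absolutely irreducible representations. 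Your version is thus more self-contained; the paper's is terser but relies on an external reference for exactly this standard computation.
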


\begin{proof} Let $M=H^0(K_n,\A^\dagger)^\vee$ be the Pontryagin dual 
of $H^0(K_n,\A^\dagger)$. By the topological Nakayama's Lemma, 
it is enough to show that $M/\mathfrak{m}_\I M=0$, where $\mathfrak{m}_\I$ 
is the maximal ideal of $\I$. For this, taking again Pontryagin duals, it is enough to show that 
\[H^0(K_n,\A^\dagger)[\mathfrak{m}_R]=H^0(K_n,\A^\dagger[\mathfrak{m}_R])=0.\] Now the Galois representation $\A^\dagger[\mathfrak{m}_R]$ is isomorphic to $\bar\rho_{f_0}$, which is irreducible by assumption, and it follows from standard arguments (\emph{e.g.} \cite[Lemmas 3.9, 3.10]{LV-TAMS}) that the $K_n$-invariants of $\A^\dagger[\mathfrak{m}_\I]$ are trivial. 
\end{proof}

It follows from Lemma \ref{vanishing} that 
$\tilde H^1_f(K_n,\A^\dagger)\simeq\Sel_\mathrm{str}(K_n,\A^\dagger)$. 
Thus, summing up, we 
have 
\begin{equation}\label{selmer comparison}
\Sel_\mathrm{str}(K_n,\A^\dagger)\simeq\Sel(K_n,\A^\dagger)\simeq
\tilde H^1_f(K_n,\A^\dagger)\end{equation}
and, taking direct limits with respect to the canonical restriction maps,  
\begin{equation}\label{selmer comparison infty}
\Sel_\mathrm{str}(K_\infty,\A^\dagger)\simeq\Sel(K_\infty,\A^\dagger)\simeq
\tilde H^1_{f,\mathrm{Iw}}(K_\infty,\A^\dagger)=\dirlim_n\tilde H^1_f(K_n,\A^\dagger).\end{equation}

\subsection{Control theorems for Hida representations}
Let $I_n$ be the kernel of the map $\Lambda\rightarrow\mathcal{O}$ 
which takes the topological generator $\gamma$ of $\Gamma_\infty$ 
to $\gamma^{p^n}-1$. For an integer $n\geq 0$, define 
\[\Delta_n=\Gal(K_\infty/K_n).\] In particular, we have 
$\Gamma_\infty/\Delta_n\simeq\Gamma_n$.  

\begin{theorem}\label{control theorem}
The kernel and cokernel of the map 
\[\tilde H^1_{f}(K_n,\mathbf{A}^\dagger)
\longrightarrow \tilde H^1_{f,\mathrm{Iw}}(K_\infty,\mathbf{A}^\dagger)^{\Delta_n}\]
are cotorsion $\Lambda_{\mathcal{R}}/I_n\Lambda_{\mathcal{R}}\simeq\I[\Gamma_n]$-modules.\end{theorem}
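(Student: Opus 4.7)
The plan is to reduce the assertion to Corollary \ref{coro 2.10} (applied with $R=\I$, $F=K$, $F_\infty=K_\infty$, $T=\mathbf{T}^\dagger$, and the ordinary filtration at each prime $v$ above $p$) and then to transport the resulting statement across the identifications \eqref{selmer comparison}--\eqref{selmer comparison infty}. Once this reduction is in place, all that remains is to verify that Assumption \ref{ass} holds for the data above.

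First, I would observe that the comparison \eqref{selmer comparison} is natural in $n$ and commutes with the restriction maps on both sides, so that the map in the statement is identified, via \eqref{selmer comparison} and \eqref{selmer comparison infty}, with the restriction
\[\Sel_\mathrm{str}(K_n,\mathbf{A}^\dagger)\longrightarrow\Sel_\mathrm{str}(K_\infty,\mathbf{A}^\dagger)^{\Delta_n}\]
to which Corollary \ref{coro 2.10} applies. It therefore suffices to check Assumption \ref{ass} for $T=\mathbf{T}^\dagger$. Item (1) (unramifiedness outside $\Sigma\supseteq\Sigma_p\cup\{\ell\mid N\}$) and item (3) (freeness of $F_v^{\pm}(\mathbf{T}^\dagger)$ over $\I$ at each prime above $p$) are standard features of Hida's construction as recalled in \S\ref{Hida}. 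Item (2), the pseudo-nullity of $H^0(F_\Sigma/K_n,\mathbf{A}^\dagger)^\vee$, is obtained from the stronger vanishing $H^0(K_n,\mathbf{A}^\dagger)=0$ by repeating verbatim the Nakayama / mod-$\mathfrak{m}_\I$ argument of Lemma \ref{vanishing}, using absolute irreducibility of $\bar\rho_{f_0}$.

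The delicate point is item (4). Since $p$ splits in $K$, each decomposition group $G_v$ for $v\mid p$ may be identified with $G_{\Q_p}$, and by the discussion in \S\ref{Hida} the $G_v$-action on $F_v^-(\mathbf{T}^\dagger)$ is given by $\eta_v\cdot\Theta^{-1}$, where $\eta_v$ is unramified with $\eta_v(\Frob_v)=U_p$ and $\Theta^{-1}|_{G_v}$ factors through the cyclotomic character by \eqref{def:crit}. I would then decompose $\eta_v\Theta^{-1}=\delta_v\cdot\theta_v$ into its unramified part $\delta_v$ and the part $\theta_v$ factoring through $\Gal(F_v(\mu_{p^\infty})/F_v)$. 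A direct bookkeeping exercise, using the factorisation of $\Theta$ into its tame and wild components, shows that $\delta_v(\Frob_v)\equiv\pm a_p\pmod{\mathfrak{m}_\I}$ at each of the two primes of $K$ above $p$, so the non-anomalous hypothesis $a_p\not\equiv\pm 1\pmod{\mathfrak{m}_\mathcal{O}}$ in Assumption \ref{ass2} furnishes exactly the required non-triviality $u_v\not\equiv 1\pmod{\mathfrak{m}_\I}$. This last step — disentangling the tame twist contributed by $\Theta^{-1}$ so as to identify the unit $u_v$ modulo $\mathfrak{m}_\I$ at each of $\mathfrak p$ and $\bar{\mathfrak p}$ — is the only real technical obstacle; once it is carried out, the general machinery of \S\ref{CT} immediately delivers the theorem.
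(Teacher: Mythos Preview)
Your proposal is correct and follows essentially the same route as the paper's own proof: reduce to Corollary~\ref{coro 2.10} via the identifications \eqref{selmer comparison}--\eqref{selmer comparison infty}, then verify Assumption~\ref{ass} item by item, with Lemma~\ref{vanishing} disposing of (2) and the non-anomalous hypothesis from Assumption~\ref{ass2} supplying the condition $u_v\not\equiv 1$ in (4). Your treatment of (4) is slightly more elaborate than the paper's (which simply records the choice of $\delta_v$ and $\theta_v$ and notes that $\theta_v$ factors through the cyclotomic extension), but the substance is identical.
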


\begin{proof}
This follows from Corollary \ref{coro 2.10} and \eqref{selmer comparison}, \eqref{selmer comparison infty} once we check that 
Assumption \ref{ass} of Proposition \ref{prop 2.10} are satisfied 
for $T=\mathbf{T}^\dagger$ and $R=\mathcal{R}$ 
in Assumption \ref{ass}. 
We know that $\mathbf{T}^\dagger$ is free of rank $2$  over $\mathcal{R}$, and is unramified over the set of places $\Sigma$ dividing $Np$;  moreover, $F^+_v(\mathbf{T}^\dagger)$ and $F^-_v(\mathbf{T}^\dagger$ 
are free of rank $1$ over $\mathcal{R}$, so both (1) and (3) are satisfied. 
For (2) we need to check that $H^0(K_\Sigma/K_n,\A^\dagger)$ is a pseudo-null $\Lambda_{\mathcal{R}}$-module. Since $\A^\dagger$ is unramified outside $\Sigma$, the Galois group $\Gal(\bar{\Q}/K_\Sigma)$ acts trivially 
on $\A^\dagger$, so $H^0(K_\Sigma/K_n,\A^\dagger)=H^0(K_n,\A^\dagger)$, which is trivial by Lemma \ref{vanishing}. 
Condition (2) is guaranteed by the fact that $p$ is non-anomalous in 
Assumption \ref{ass2}, after taking $\delta_v=\eta_v^{-1}$ and  
$\theta_v=\Theta^{-1}\varepsilon_\mathrm{cyc}[\varepsilon_\mathrm{cyc}]$, noting that $\theta_v$ factors through the cyclotomic $\Z_p$-extension of $K$. 
\end{proof}

\section{Proofs of the main results} 

The following result proves \cite[Conjecture 9.12]{LV-MM}, 
a definite version of the two-variable Iwasawa main conjecture for Hida families in the anticyclotomic context. 

\begin{theorem}\label{theorem} Suppose Assumptions $\ref{ass1}$ and $\ref{ass2}$ are satisfied, and that the Hida family $\mathbf{f}$ admits a specialisation $f_k$ of weight $k\equiv 2\pmod{p-1}$ and trivial nebentypus. Then the group $\tilde H^1_{f,\mathrm{Iw}}(K_\infty,\mathbf{A}^\dagger)$ 
is a finitely generated cotorsion 
$\Lambda_{\mathcal{R}}$-module and there is an equality 
\[\left(L_p(\mathbf{f}/K)\right)=\mathrm{Char}_{\Lambda_{\mathcal{R}}}\left(H^1_{f,\mathrm{Iw}}(K_\infty,\mathbf{A}^\dagger)^\vee\right)\]
of ideals in $\Lambda_{\mathcal{R}}$. 
\end{theorem}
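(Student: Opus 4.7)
The plan is to deduce the equality of characteristic ideals from the three-variable Iwasawa main conjecture of Skinner--Urban \cite{SU} by specialising to the anticyclotomic line. Skinner--Urban prove, under the hypotheses on $\bar\rho_{f_0}$ and on the factorisation of $N$ captured by our Assumptions \ref{ass1} and \ref{ass2} together with the existence of a weight $k\equiv 2\pmod{p-1}$ specialisation with trivial nebentypus, an equality of characteristic ideals in a three-variable Iwasawa algebra (of the form $\mathcal{R}\pwseries{\Gamma^{\mathrm{cyc}}\times\Gamma_\infty}$, with $\Gamma^{\mathrm{cyc}}$ the cyclotomic direction and $\Gamma_\infty$ the anticyclotomic direction) between the Pontryagin dual of a three-variable Greenberg Selmer group of $\mathbf{A}^\dagger$ and the three-variable $p$-adic $L$-function $\mathcal{L}_{\mathrm{SU}}$.

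The proof then proceeds by applying the quotient map $\mathcal{R}\pwseries{\Gamma^{\mathrm{cyc}}\times\Gamma_\infty}\twoheadrightarrow\Lambda_{\mathcal{R}}$ corresponding to the self-dual twist along the cyclotomic variable, and tracking both sides of the Skinner--Urban equality through this specialisation. On the analytic side, the delicate comparison results of \cite{CL}, \cite{CKL}, \cite{KL} identify the specialisation of $\mathcal{L}_{\mathrm{SU}}$, up to a unit of $\Lambda_{\mathcal{R}}$, with the Gross-point $p$-adic $L$-function $L_p(\mathbf{f}/K)$ of Definition \ref{p-adic L-function}. On the algebraic side, the specialisation of the three-variable Selmer group of $\mathbf{A}^\dagger$ is identified with $\tilde H^1_{f,\mathrm{Iw}}(K_\infty,\mathbf{A}^\dagger)^\vee$ up to pseudo-null $\Lambda_{\mathcal{R}}$-modules, by combining the Selmer group identifications \eqref{selmer comparison} and \eqref{selmer comparison infty}, the vanishing of $H^0$ in Lemma \ref{vanishing}, and the control results of Section \ref{CT} (in particular Theorem \ref{control theorem} together with Lemma \ref{lemma 2.9}). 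Since $\Lambda_{\mathcal{R}}$ is a UFD (Lemma \ref{lemma3.1} and the subsequent discussion), characteristic ideals are insensitive to pseudo-null submodules, so the specialisation of the Skinner--Urban equality descends to the desired equality in $\Lambda_{\mathcal{R}}$.

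The cotorsion assertion on $\tilde H^1_{f,\mathrm{Iw}}(K_\infty,\mathbf{A}^\dagger)$ will then follow from the equality once one knows that $L_p(\mathbf{f}/K)\neq 0$ in $\Lambda_{\mathcal{R}}$; under our assumptions this is available from \cite{LV-MM}, ultimately via specialisation to the arithmetic prime $\kappa_0$ and classical non-vanishing results for anticyclotomic $p$-adic $L$-functions of single modular forms. The main obstacle of the argument is the precise reconciliation of the two intrinsically different constructions of the $p$-adic $L$-function --- Skinner--Urban's modular-symbol $\mathcal{L}_{\mathrm{SU}}$ versus the Gross-point construction $\Theta_{\infty}(\mathbf{f})$ --- including matching period factors and auxiliary units appearing upon specialisation; this is exactly the content of \cite{CL}, \cite{CKL}, \cite{KL}, which we must invoke as a black box. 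A secondary technical difficulty is verifying that Skinner--Urban's Greenberg Selmer group, specialised to the anticyclotomic line, agrees with our strict Greenberg (equivalently, Nekov\'{a}\v{r}'s extended) Selmer group up to pseudo-null modules, which is precisely the role played by the control machinery developed in Section \ref{CT}.
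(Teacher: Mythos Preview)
Your overall strategy---specialise Skinner--Urban's three-variable main conjecture to the anticyclotomic line, matching the analytic side via \cite{CL,CKL,KL}---is the right idea, but the execution diverges from the paper's in an important way and contains a genuine gap on the algebraic side.

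The paper does \emph{not} specialise the full three-variable equality. Instead it proceeds asymmetrically: one divisibility $\mathrm{Char}_{\Lambda_{\mathcal R}}(\cdot)\subseteq (L_p(\mathbf f/K))$ is obtained by specialising only the Skinner--Urban \emph{divisibility} via \cite[Lemma~1.2]{Rubin} (together with the $L$-function comparison \cite[Theorem~11.1]{KL}), and the reverse inclusion is obtained by checking equality at a single classical arithmetic specialisation, which is exactly \cite[Corollary~3]{CKL}. This two-step approach is standard precisely because specialising a \emph{divisibility} of characteristic ideals is much easier than specialising an \emph{equality}: the latter requires control over pseudo-null submodules of the three-variable Selmer dual and regularity of the specialisation element on it, none of which is established here.

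Your proposal to handle the algebraic side by invoking Theorem~\ref{control theorem} is the concrete gap. That theorem compares $\tilde H^1_f(K_n,\mathbf A^\dagger)$ with $\tilde H^1_{f,\mathrm{Iw}}(K_\infty,\mathbf A^\dagger)^{\Delta_n}$; it is a control theorem \emph{in the anticyclotomic variable}, going from $K_\infty$ down to $K_n$. What you need to pass from Skinner--Urban's three-variable Selmer group to the two-variable one is a control theorem \emph{in the cyclotomic variable}, i.e.\ killing $\Gamma^{\mathrm{cyc}}$ while keeping $\Gamma_\infty$. The general machinery of \S\ref{CT} could in principle be adapted to that setting, but the verification of the hypotheses (Assumption~\ref{ass}, in particular condition~(4) on $F_v^-$) is carried out in this paper only for the anticyclotomic tower; the cyclotomic analogue is the content of Ochiai~\cite{Ochiai3}, not of \S\ref{CT}. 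Moreover, even granting such a control result up to pseudo-null, you would still need to know that the three-variable characteristic ideal specialises to the two-variable one on the nose, which is not automatic. In the paper's argument Theorem~\ref{control theorem} plays no role in the proof of Theorem~\ref{theorem}; it is used only afterwards, in Corollary~\ref{theorem2}.

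Finally, the nonvanishing $L_p(\mathbf f/K)\neq 0$ is not taken from \cite{LV-MM} as you suggest, but follows once the equality of ideals is in hand: the equality at the classical point supplied by \cite[Corollary~3]{CKL} already forces the specialised (hence the two-variable) $L$-function to be nonzero, whence cotorsionness.
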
 

\begin{proof} That $\tilde H^1_{f,\mathrm{Iw}}(K_\infty,\mathbf{A}^\dagger)$ is finitely generated follows easily from the topological Nakayama's Lemma. 
The proof of \cite[Theorem 5.3]{CKL} shows the inclusion of the characteristic ideal in the ideal generated by the 
$p$-adic $L$-function (see in particular the last displayed equation in the proof 
of \cite[Theorem 5.3]{CKL}). More precisely, by \cite[Theorem 11.1]{KL} we know that 
$L_p(\mathbf{f}/K)$ is equal, up to units of $\mathbb{I}$, to the self-dual twist of the restriction of 
Skinner--Urban's three-variable $p$-adic $L$-function to the anticyclotomic line (see \cite[\S 4.4]{KL}). 
Combining \cite[Theorem 3.26 ]{SU} and \cite[Lemma 1.2]{Rubin}, we see that the inclusion of 
$\mathrm{Char}_{\Lambda_{\mathcal{R}}}\left(H^1_{f,\mathrm{Iw}}(K_\infty,\mathbf{A}^\dagger)^\vee\right)$
in $\left(L_p(\mathbf{f}/K)\right)$ holds. To get the equality, 
it suffices to establish equality for some classical specialisation, which follows  
in our setting from \cite[Corollary 3]{CKL}. Finally, since $L_p(\mathbf{f}/K)\neq 0$, it follows that 
$H^1_{f,\mathrm{Iw}}(K_\infty,\mathbf{A}^\dagger)$ is $\Lambda_\mathcal{R}$-cotorsion. 
\end{proof}

As a corollary of Theorem \ref{theorem}, we obtain a result in the direction of \cite[Conjecture 9.5]{LV-MM}, 
a definite version of the horizontal non-vanishing conjecture of Howard \cite[Conjecture 3.4.1]{howard-invmath}. Denote $\chi_\mathrm{triv}:\mathcal{R}\pwseries{\Gamma_\infty}\rightarrow\mathcal{R}$ the morphism associate with the trivial character of $\Gamma_\infty$, 
and define
\begin{equation}\label{J}
\mathcal{J}_0=\chi_\mathrm{triv}\left(\Theta_{\infty}(\mathbf{f})\right).\end{equation}

\begin{corollary}\label{theorem2} Let the assumptions be as in Theorem $\ref{theorem}$. If 
$\tilde H^1_f(K,\mathbf{T}^\dagger)$ is a torsion $\mathcal{R}$ module, 
then $\mathcal{J}_0\neq0$.  
\end{corollary}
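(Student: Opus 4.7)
The plan is to reduce the statement to the specialisation of $L_p(\mathbf{f}/K)$ at the trivial character and then run the control theorem from Section~\ref{CT} together with the main conjecture from Theorem~\ref{theorem}. First, since $\chi_\mathrm{triv}$ sends every element of $\Gamma_\infty$ (and hence the involution $x\mapsto x^*$) to the identity, Definition~\ref{p-adic L-function} and \eqref{J} yield the key identity
\[
\chi_\mathrm{triv}\bigl(L_p(\mathbf{f}/K)\bigr)\;=\;\chi_\mathrm{triv}\bigl(\Theta_\infty(\mathbf{f})\bigr)\cdot\chi_\mathrm{triv}\bigl(\Theta_\infty(\mathbf{f})^*\bigr)\;=\;\mathcal{J}_0^{\,2}
\]
in $\mathcal{R}$. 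Hence it is enough to establish $\chi_\mathrm{triv}(L_p(\mathbf{f}/K))\neq 0$.

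Next, I would invoke Theorem~\ref{theorem}, which identifies $(L_p(\mathbf{f}/K))$ with the characteristic ideal of $X:=\tilde H^1_{f,\mathrm{Iw}}(K_\infty,\mathbf{A}^\dagger)^\vee$. A direct application of the structure theorem for finitely generated torsion $\Lambda_\mathcal{R}$-modules shows that $\chi_\mathrm{triv}(L_p(\mathbf{f}/K))$ is non-zero in $\mathcal{R}$ if and only if $X/IX$ is $\mathcal{R}$-torsion, where $I\subseteq\Lambda_\mathcal{R}$ is the augmentation ideal. Using Lemma~\ref{lemma 2.9} to absorb the pseudo-null discrepancy and dualising, this is equivalent to saying that $\tilde H^1_{f,\mathrm{Iw}}(K_\infty,\mathbf{A}^\dagger)^{\Gamma_\infty}$ is $\mathcal{R}$-cotorsion.

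I would then descend from $K_\infty$ to $K$ by applying Theorem~\ref{control theorem} with $n=0$: the restriction map
\[
\tilde H^1_f(K,\mathbf{A}^\dagger)\;\longrightarrow\;\tilde H^1_{f,\mathrm{Iw}}(K_\infty,\mathbf{A}^\dagger)^{\Gamma_\infty}
\]
has kernel and cokernel that are cotorsion $\mathcal{R}\simeq\Lambda_\mathcal{R}/I\Lambda_\mathcal{R}$-modules. Consequently, it suffices to show that $\tilde H^1_f(K,\mathbf{A}^\dagger)$ itself is $\mathcal{R}$-cotorsion.

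The remaining, and genuinely hardest, step is to translate the hypothesis on $\tilde H^1_f(K,\mathbf{T}^\dagger)$ into a statement about $\tilde H^1_f(K,\mathbf{A}^\dagger)$. Here I would exploit the self-duality of $\mathbf{T}^\dagger$ together with Nekov\'a\v{r}'s generalised Poitou--Tate/Cassels--Tate machinery for extended Selmer groups (\cite[Ch.~8--9]{Nekovar}): the resulting perfect pairing between the $\mathbf{T}^\dagger$- and $\mathbf{A}^\dagger$-Selmer complexes, combined with the vanishing of the auxiliary $H^0$ terms already established in Lemma~\ref{vanishing}, implies that $\tilde H^1_f(K,\mathbf{T}^\dagger)$ and $\tilde H^1_f(K,\mathbf{A}^\dagger)^\vee$ have the same $\mathcal{R}$-rank. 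Thus $\mathcal{R}$-torsionness of the former forces $\mathcal{R}$-cotorsionness of the latter, closing the chain of implications and delivering $\mathcal{J}_0\neq 0$. The main obstacle is making this duality step precise in our deformation-theoretic setting, in particular verifying that the correction terms arising in Nekov\'a\v{r}'s framework are pseudo-null over $\mathcal{R}$ and therefore do not affect the rank comparison.
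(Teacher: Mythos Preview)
Your proposal follows the same overall architecture as the paper's proof: reduce $\mathcal{J}_0\neq 0$ to $\chi_\mathrm{triv}(L_p(\mathbf{f}/K))\neq 0$, invoke the main conjecture (Theorem~\ref{theorem}) and the control theorem (Theorem~\ref{control theorem}) to reduce to showing that $\tilde H^1_f(K,\mathbf{A}^\dagger)$ is $\mathcal{R}$-cotorsion, and finally deduce the latter from the torsion hypothesis on $\tilde H^1_f(K,\mathbf{T}^\dagger)$ via duality. The only substantive divergence is in how this last duality step is carried out. The paper does not attempt a direct rank comparison between $\tilde H^1_f(K,\mathbf{T}^\dagger)$ and $\tilde H^1_f(K,\mathbf{A}^\dagger)^\vee$: instead it first shows, via \cite[Corollary~5.5]{LV-Pisa}, that torsion-ness of $\tilde H^1_f(K,\mathbf{T}^\dagger)$ forces $\tilde H^1_f(K,V_{f_\kappa}^\dagger)=0$ for all but finitely many arithmetic $\kappa$; then uses \cite[Proposition~12.7.13.4(i)]{Nekovar} to conclude that $\tilde H^2_f(K,\mathbf{T}^\dagger)$ is $\mathcal{R}$-torsion; and only then applies Poitou--Tate duality to pass to $\tilde H^1_f(K,\mathbf{A}^\dagger)$.

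Your direct rank-comparison route is viable, but be aware that Nekov\'a\v{r}'s global duality pairs $\tilde H^i_f(K,\mathbf{T}^\dagger)$ with $\tilde H^{3-i}_f(K,\mathbf{A}^\dagger)$, so it relates $\tilde H^1_f(K,\mathbf{T}^\dagger)$ to $\tilde H^2_f(K,\mathbf{A}^\dagger)$, not to $\tilde H^1_f(K,\mathbf{A}^\dagger)$. Bridging that gap requires more than the vanishing of $\tilde H^0_f$ from Lemma~\ref{vanishing}: one needs the Euler--Poincar\'e characteristic of the Selmer complex over $K$ to vanish (equivalently, $\mathrm{rank}_\mathcal{R}\tilde H^1_f=\mathrm{rank}_\mathcal{R}\tilde H^2_f$), and this is where the genuine content sits rather than in pseudo-null correction terms. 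The paper's specialization argument is precisely a concrete way to establish this rank equality without computing the Euler characteristic of the big Selmer complex directly.
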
 

\begin{proof}
Since $\tilde H^1_f(K,\mathbf{T}^\dagger)$ is a torsion $\mathcal{R}$-module, it follows from \cite[Corollary 5.5]{LV-Pisa} that 
$\tilde H^1_f(K,V_{f_\kappa}^\dagger)=0$ for 
all but finitely many arithmetic character $\kappa$, where 
$\tilde H^1_f(K,V_{f_\kappa}^\dagger)$ is the extended Bloch--Kato Selmer group of $V_{f_\kappa}^\dagger$. By \cite[Proposition 12.7.13.4(i)]{Nekovar}, this implies that $\tilde H^2_f(K,\mathbf{T}^\dagger)$ is a torsion $\mathcal{R}$-module.
Poitou--Tate global duality \cite[\S0.1]{Nekovar} implies then that $H^1_{f}(K,\mathbf{A}^\dagger)^\vee$ is also a torsion 
$\mathcal{R}$-module. 

Let $I$ be the kernel of $\chi_\mathrm{triv}$.
By Theorem \ref{control theorem}, the kernel and cokernel of the map
\[H^1_{f,\mathrm{Iw}}(K_\infty,\mathbf{A}^\dagger)^\vee/IH^1_{f,\mathrm{Iw}}(K_\infty,\mathbf{A}^\dagger)^\vee\longrightarrow 
H^1_{f}(K,\mathbf{A}^\dagger)^\vee 
\]
are torsion $\I$-modules. 
Since $H^1_{f}(K,\mathbf{A}^\dagger)^\vee$ is a torsion 
$\mathcal{R}$-module, it follows that 
\[H^1_{f,\mathrm{Iw}}(K_\infty,\mathbf{A}^\dagger)^\vee/IH^1_{f,\mathrm{Iw}}(K_\infty,\mathbf{A}^\dagger)^\vee\] is also a torsion $\mathcal{R}$-module, and its characteristic power series is then 
a non-zero element of $\mathcal{R}$. 
By Theorem \ref{theorem}  
we then have $L_p(\mathbf{f}/K)(\chi_\mathrm{triv})\neq 0$.  
The result follows now from Definition \ref{p-adic L-function} 
and \eqref{J}. 
\end{proof}

\bibliographystyle{amsalpha} 
\bibliography{paper}
\end{document}